\numberwithin{equation}{section}
\newtheorem{thm}{Theorem}[section]
\newtheorem{prop}[thm]{Proposition}
\newtheorem{cor}[thm]{Corollary}
\theoremstyle{remark}
\newtheorem{exmp}{Example}[section]
\newtheorem{rem}[thm]{Remark}
\newcommand{\E}{\mathcal{E}}
\newcommand{\B}{\mathcal{B}}
\newcommand{\F}{\mathtt{F}}
\newcommand{\Lloc}{L^1_{\mathrm{loc}}}
\newcommand{\ACloc}{AC}
\newcommand{\rg}{\mathrm{Im}}
\newcommand{\Ker}{\mathrm{Ker}}
\newcommand{\SX}{L^1}
\newcommand{\SXP}{L^1_{\varphi}}
\newcommand{\SXI}{L^{\infty}}
\begin{document}

\title[Substochastic semigroups and densities of PDMPs]
{Substochastic semigroups and densities of piecewise deterministic Markov
processes}

\author[M. Tyran-Kami\'nska]{Marta Tyran-Kami\'nska}
\address{Institute of Mathematics, Polish
Academy of Sciences and Institute of Mathematics, University of Silesia, Bankowa
14, 40-007 Katowice, Poland} \email{mtyran@us.edu.pl}

\subjclass[2000]{primary 47D06; secondary 60J25, 60J35, 60J75}%
\keywords{piecewise deterministic Markov process, stochastic semigroup,
strongly stable semigroup, fragmentation models}%

\maketitle

\begin{abstract}
Necessary and sufficient conditions are given for a substochastic semigroup on
$L^1$ obtained through the Kato--Voigt perturbation theorem to be either
stochastic or strongly stable. We show how such semigroups are related to
piecewise deterministic Markov process, provide a probabilistic interpretation of
our results, and apply them to fragmentation equations.
\end{abstract}

\section{Introduction}

Piecewise deterministic Markov processes (PDMPs) are Markov processes involving
deterministic motion punctuated by random jumps. A general theory for such
processes was introduced in \cite{davis84} within an abstract framework with
numerous examples from queueing and control theory. The sample paths $X(t)$ of the
PDMP depend on three local characteristics:   a flow $\pi$, a nonnegative jump
rate function $\varphi$, and a stochastic transition kernel $\mathcal{J}$. Instead
of a flow, here we consider semi-flows on a Borel state space $E$ such that
$\pi_t(E)\subseteq E$, $t\ge 0$, which leads to PDMPs without active boundaries
and allows us to use the more general formulation of stochastic models presented
in~\cite{jacod96}. Starting from $x$ the process follows the trajectory $\pi_t x$
until the first jump time $t_1$ which occurs at a rate $\varphi$. The value of the
process at the jump time $t_1$ is selected from the distribution
$\mathcal{J}(\pi_{t_1}x,\cdot)$ and the process restarts afresh from this new
point (see Section~\ref{sec:PDPc} for the construction). If the function $\varphi$
is unbounded then it might happen that the process is only defined up to a finite
random time, called an explosion time, so that we study the minimal PDMP with the
given characteristics.

Let the state space be a $\sigma$-finite measure space $(E,\E,m)$.
Suppose that the distribution of $X(0)$ is absolutely continuous
with respect to the measure $m$. One of our main objectives is to
give sufficient conditions for the distribution of $X(t)$ to be
absolutely continuous with respect to $m$ for all $t>0$, and to
derive rigorously an evolution equation for its density  $u(t,x)$.
This leads us to study equations of the form
\begin{equation}\label{eq:evd2}
\dfrac {\partial u(t,x) }{\partial t}=A_0u(t,x)-\varphi(x)u(t,x)+ P(\varphi
u(t,\cdot))(x),
\end{equation}
where $P$ is a stochastic operator on $\SX$ corresponding to the stochastic kernel
$\mathcal{J}$ (see Section~\ref{sec:pre}) and $A_0$ is the (infinitesimal)
generator of a strongly continuous semigroup of stochastic operators
(\emph{stochastic semigroup}) corresponding to the deterministic semi-flow $\pi$.
Let us write
\begin{equation}\label{eq:opAC}
Au=A_0u-\varphi u\quad \text{and}\quad  \mathcal{C}u=Au+P(\varphi u).
\end{equation}
When $\varphi$ is bounded, then the Cauchy problem associated with \eqref{eq:evd2}
is well posed, by the bounded perturbation theorem (see e.g.~\cite[Section
III.1]{engelnagel00}), and $\mathcal{C}$ generates a stochastic semigroup. If
$\varphi$ is unbounded, then $\mathcal{C}$ is the sum of two unbounded operators
and the existence and uniqueness of solutions to the Cauchy problem in $\SX$ is
problematic. The strategy which can be adapted to tackle such problems involves
perturbation results for strongly continuous semigroups of positive contractions
on $\SX$ (\emph{substochastic semigroups}). We refer the reader to the monograph
\cite{banasiakarlotti06} for an extensive overview on the subject. We make use of
one such result (Theorem~\ref{thm:kato} in Section~\ref{ssec:MSG}), which goes
back to~\cite{kato54} in the case of a discrete state space and was subsequently
developed in~\cite{voigt87,arlotti91,banasiak01}, from which it follows that the
operator $\mathcal{C}$ has an extension $C$ generating a substochastic semigroup
$\{P(t)\}_{t\ge 0}$ provided that the operator $A$ is the generator of a
substochastic semigroup on $\SX$ and $\mathcal{C}$ is defined on the domain
$\mathcal{D}(A)$ of~$A$. In general, the semigroup $\{P(t)\}_{t\ge 0}$ is
stochastic if and only if the generator $C$ is the minimal closed extension of
$(\mathcal{C},\mathcal{D}(A))$. In that case, if $u_0$ is nonnegative then  the
norm
\begin{equation*}
\|P(t)u_0\|=\int_E P(t)u_0(x) m(dx),\quad t\ge 0,
\end{equation*}
is constant in time, meaning that there is conservation of mass. If  $C$ is not
the minimal closed extension of  $(\mathcal{C},\mathcal{D}(A))$, then we have
\begin{equation}\label{eq:dishonet}
\|P(t)u_0\|<\|u_0\|
\end{equation} for some $u_0$ and $t>0$, meaning that there is
a loss of mass. Our objective is to study the two extreme cases: either
$\{P(t)\}_{t\ge0}$ is stochastic or it is \emph{strongly stable}
\[
\lim_{t\to\infty}\|P(t)u_0\|=0 \quad \text{for all}\quad u_0\in\SX.
\]
In Section~\ref{ssec:MSG} we provide general necessary and sufficient conditions
for either to hold (Theorems~\ref{prop:pert}--\ref{thm:opMarkov}). To the best of
our knowledge all past investigations of the semigroup $\{P(t)\}_{t\ge0}$
concentrated on providing necessary and sufficient conditions for conservation of
mass \cite{banasiakarlotti06} and it was only shown in~\cite{banasiakmokhtar05}
that if there is a loss of mass for fragmentation models and explosive birth-death
processes then \eqref{eq:dishonet} holds for every $u_0$ and sufficiently large
$t$.  Thus, the study of strong stability seems to be new.

Formulating the problem in the context of piecewise deterministic Markov processes
allows us to identify the corresponding semigroup $\{P(t)\}_{t\ge0}$ from a
probabilistic point of view (Theorem~\ref{thm:exist}). The combination of
probabilistic and fun\-ctio\-nal-analytic methods leads to rigorous results
providing a derivation of an evolution equation for densities of such processes
and necessary and sufficient conditions for the
 semigroup to be either stochastic or strongly stable.
In the discrete state space, \eqref{eq:evd2} with $A_0=0$ is the forward
Kolmogorov equation \cite{feller40} and  we recover the results of
\cite{kato54,reuter57}. To illustrate our general approach, we use fragmentation
models (Section~\ref{sec:frag}) in our framework and provide a refined analysis of
such models, previously studied extensively with either purely functional-analytic
or probabilistic methods \cite{melzak57,filippov61,mcgradyziff87,edwards90,
lamb97,haas03,fourniergiet03,
banasiaklamb03,arlottibanasiak04,arinorudnicki04,banasiak04,wagner05}. 
Our results can also be applied to stochastic differential equations with jumps
\cite{fourniergiet06,rudnicki07}.

The outline of this paper is as follows. In Section~\ref{sec:pre} we collect
relevant definitions for stochastic operators and give necessary and sufficient
conditions for strongly stable operators and semigroups. In~Section~\ref{ssec:MSG}
we recall the Kato--Voigt perturbation theorem and we prove necessary and
sufficient conditions for the corresponding semigroup to be either stochastic or
strongly stable. In Section~\ref{sec:exMs} we describe the extension techniques
introduced in~\cite{arlotti91}, and further developed in~\cite{banasiak01}, which
provide the characterization of the generator and the evolution equation for
densities. In Section \ref{sec:PDP} we study piecewise deterministic Markov
processes. In Section~\ref{sec:PDPc} we describe a general construction of PDMPs
and in Section~\ref{ssec:evol} the relation of such stochastic models to the
corresponding semigroups $\{P(t)\}_{t\ge 0}$ on $\SX$. In Section~\ref{sec:frag}
we let the operators $P$ and $A$ have definite forms and give a number of concrete
examples of situations that fit directly into our framework.

\section{Preliminaries}\label{sec:pre}
Let $(E,\E,m)$ be a $\sigma$-finite measure space and $L^p=L^p(E,\E,m)$ for all
$p\ge 1$. A linear operator $A\colon\mathcal{D}\to \SX$, where $\mathcal{D}$ is a
linear subspace of $\SX$, is said to be \emph{positive} if $Au\ge 0$ for $u\in
\mathcal{D}_+:=\mathcal{D}\cap \SX_+$. Then we write $A\ge 0$. Every positive
operator $A$ with $\mathcal{D}=\SX$ is a bounded operator. In general, we will
denote the domain of any operator $A$ by $\mathcal{D}(A)$, its range by $\rg(A)$,
$\rg(A)=\{Au:u\in \mathcal{D}(A)\}$, and its null space by $\Ker(A)$,
$\Ker(A)=\{u\in \mathcal{D}(A):Au=0\}$. The \emph{resolvent set} $\rho(A)$ of $A$
is the set of all complex numbers $\lambda$ for which $\lambda -A$ is invertible.
The family $R(\lambda,A):=(\lambda-A)^{-1}$, $\lambda\in\rho(A)$, of bounded
linear operators is called the resolvent of $A$. Finally, if $(A,\mathcal{D}(A))$
is the generator of a substochastic semigroup then $R(\lambda,A)u\ge R(\mu,A)u\ge
0$ for $\mu> \lambda>0$ and $u\in\SX_+$.

Let $D(m)\subset \SX$ be the set of all \emph{densities} on $E$, i.e.
\[
D(m)=\{u\in \, \SX: \,\, u\ge 0,\,\, \|u\|=1\},
\]
where $\|\cdot\|$ is the norm in $\SX$. A linear operator $P\colon \SX\to \SX$
such that $P(D(m))\subseteq D(m)$ is called \emph{stochastic} or
\emph{Markov}~\cite{almcmbk94}.

Let $\mathcal{J}\colon E\times \E\to[0,1]$ be a \emph{stochastic transition
kernel}, i.e. $\mathcal{J}(x,\cdot)$ is a probability measure for each $x\in E$
and the function $x\mapsto\mathcal{J}(x, B)$ is measurable for each $B\in\E$, and
let $P$ be a stochastic operator on $\SX$. If
\[
\int_E \mathcal{J}(x,B)u(x)m(dx)=\int_B Pu(x)m(dx)\quad \text{for all } B\in \E,
u\in D(m),
\]
then $P$ is called the \emph{transition} operator corresponding to $\mathcal{J}$.
If $p\colon E\times E\to[0,\infty)$ is a measurable function such that
\[
\int_E p(x,y) m(dx)=1,\quad  y\in E,
\]
then the operator $P$ defined by
\[
P u(x)= \int_E p(x,y)u(y) m(dy), \quad x\in E, u\in\SX,
\]
is stochastic  and it corresponds to the stochastic kernel
\[
\mathcal{J}(x,B)=\int_{B}p(y,x)m(dy),\quad x\in E, B\in \E.\] We simply say that
$P$ has \emph{kernel} $p$.

A linear operator $T$ on $\SX$ is called \emph{mean ergodic} if
\begin{equation*}
\lim_{N\to\infty}\frac{1}{N}\sum_{n=0}^{N-1} T^n u \quad \text{exists for all }
u\in \SX
\end{equation*}
and \emph{strongly stable} if
\begin{equation}\label{eq:pcL1}
\lim_{n\to\infty}\|T^n u\|=0\quad\text{for all } u\in\SX.
\end{equation}
Note that a stochastic operator is never strongly stable. We have the following
characterization of strongly stable positive contractions on $\SX$. The result
seems to be known but we cannot find appropriate references. We include its very
simple proof for the sake of completeness.

\begin{prop}\label{thm:mergeq} Let $T$ be a positive contraction on $\SX$
and  $T^*\colon \SXI\to \SXI$ be the adjoint of $T$. Then the following are
equivalent:
\begin{enumerate}
\item\label{it:li0} $T$ is mean ergodic and $\Ker(I-T)=\{0\}$.
\item\label{it:li1} $T$ is strongly stable.
\item\label{it:li2} Condition \eqref{eq:pcL1} holds for some $u\in \SX_+$, $u>0$ a.e.
\item\label{it:li3} If for some $f\in \SXI_+$ we have
$T^*f=f$ then $f=0$.
\item\label{it:li4}  $\lim\limits_{n\to\infty} T^{*n} 1=0$ a.e.
\end{enumerate}
\end{prop}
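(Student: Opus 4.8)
The plan is to prove the chain of implications \eqref{it:li1} $\Rightarrow$ \eqref{it:li2} $\Rightarrow$ \eqref{it:li0} $\Rightarrow$ \eqref{it:li1}, then handle the duality statements \eqref{it:li3} and \eqref{it:li4} separately, tying them into the cycle. The implication \eqref{it:li1} $\Rightarrow$ \eqref{it:li2} is immediate (take any strictly positive density, which exists by $\sigma$-finiteness of $m$). For \eqref{it:li2} $\Rightarrow$ \eqref{it:li0}: suppose $\|T^n u\|\to 0$ for some $u>0$ a.e. If $g\in\Ker(I-T)$, write $g=g_+-g_-$; since $T$ is a positive contraction, $\|Tg_+\|\le\|g_+\|$ and $\|Tg_-\|\le\|g_-\|$, but $Tg_+-Tg_-=g_+-g_-$ forces $T g_+=g_+$ and $Tg_-=g_-$ by a standard lattice argument (no cancellation can occur in a positive contraction when the norm is preserved on the difference — use that $|Tg|\le T|g|$ and $\|g\|=\|Tg\|\le\|T|g|\,\|\le\|g\|$ gives $\|T|g|\,\|=\||g|\,\|$, then positivity). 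So without loss of generality $g\ge 0$ and $Tg=g$, hence $\|T^n g\|=\|g\|$ for all $n$. To get a contradiction with $\|T^n u\|\to 0$, note that for $\varepsilon>0$ the set $\{u\ge\varepsilon g\wedge (\text{something})\}$... more cleanly: since $u>0$ a.e., for large $k$ the function $(g\wedge k u)$ satisfies $g\wedge ku\uparrow g$, so pick $k$ with $\|g\wedge ku\|>\|g\|/2$; then $0\le g\wedge ku\le ku$ gives $\|T^n(g\wedge ku)\|\le k\|T^n u\|\to 0$, while $g\wedge ku\le g$ and the reverse... this direction needs $T^n(g\wedge ku)\ge$ something. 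Actually use: $0\le g - g\wedge ku$, so $g = (g\wedge ku) + (g-g\wedge ku)$ and $g=T^n g = T^n(g\wedge ku)+T^n(g-g\wedge ku)$; since $\|g-g\wedge ku\|<\|g\|/2$ we get $\|T^n(g-g\wedge ku)\|<\|g\|/2$, so $\|T^n(g\wedge ku)\|>\|g\|/2$, contradicting $\|T^n(g\wedge ku)\|\le k\|T^n u\|\to 0$. Hence $g=0$, i.e. $\Ker(I-T)=\{0\}$. Mean ergodicity in \eqref{it:li0} then follows because $\frac1N\sum_{n=0}^{N-1}T^n u\to 0$ in norm for $u>0$ a.e. (Cesàro averages of a null sequence), and by a density/domination argument (every $v\in\SX$ with $|v|\le cu$ off a small set) the averages converge to $0$ for all $v$; so $T$ is mean ergodic with zero limit.

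For \eqref{it:li0} $\Rightarrow$ \eqref{it:li1}: by the mean ergodic theorem, $\SX=\Ker(I-T)\oplus\overline{\rg(I-T)}$ and the Cesàro averages converge to the projection onto $\Ker(I-T)$, which is $\{0\}$ by hypothesis; thus $\frac1N\sum_{n=0}^{N-1}T^n v\to 0$ for all $v$. To upgrade this to $\|T^n v\|\to 0$, use positivity: it suffices to treat $v\in\SX_+$, and then $n\mapsto\|T^n v\|$ is nonincreasing (since $T$ is a positive contraction, $\|T^{n+1}v\|=\|T(T^n v)\|\le\|T^n v\|$), so the Cesàro averages $\frac1N\sum\|T^n v\|\to 0$ forces $\|T^n v\|\to 0$ (a nonincreasing nonnegative sequence whose Cesàro means tend to $0$ must itself tend to $0$). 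This closes the cycle \eqref{it:li0} $\Leftrightarrow$ \eqref{it:li1} $\Leftrightarrow$ \eqref{it:li2}.

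It remains to fold in \eqref{it:li3} and \eqref{it:li4}. For \eqref{it:li1} $\Leftrightarrow$ \eqref{it:li4}: $T^{*n}1$ is a nonincreasing sequence in $\SXI_+$ bounded below by $0$ (nonincreasing because $T$ is substochastic, so $T^*1\le 1$ and $T^*$ is positive), hence converges a.e. and in $\sigma(\SXI,\SX)$ to some $f\in\SXI_+$; and for $u\in\SX_+$, $\|T^n u\|=\int T^n u\,dm=\int u\,T^{*n}1\,dm\to\int u f\,dm$ by monotone/dominated convergence, so $\|T^n u\|\to 0$ for all $u$ iff $\int uf\,dm=0$ for all $u\in\SX_+$ iff $f=0$ a.e. For \eqref{it:li3} $\Leftrightarrow$ \eqref{it:li4}: if $T^{*n}1\downarrow f$ then $T^*f=\lim T^{*(n+1)}1=f$ by monotone convergence, so $f$ is a fixed point of $T^*$ in $\SXI_+$; hence \eqref{it:li3} (no nonzero positive fixed point of $T^*$) gives $f=0$, which is \eqref{it:li4}. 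Conversely, if $g\in\SXI_+$ with $T^*g=g$, then $g\le\|g\|_\infty 1$ gives $g=T^{*n}g\le\|g\|_\infty T^{*n}1\to 0$ by \eqref{it:li4}, so $g=0$; this is \eqref{it:li3}. The only genuinely delicate point is the lattice argument in \eqref{it:li2} $\Rightarrow$ \eqref{it:li0} showing a norm-preserving positive contraction fixes $g_+$ and $g_-$ separately when it fixes $g$, and the truncation trick $g\wedge ku$ — everything else is routine manipulation of monotone sequences and Cesàro means.
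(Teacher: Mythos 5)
Your proof is correct, but it closes the equivalences along a different cycle than the paper, and the comparison is instructive. The two arguments share their central device: for $u\in\SX_+$ the additivity of the $L^1$-norm gives $\tfrac1N\bigl\|\sum_{n<N}T^nu\bigr\|=\tfrac1N\sum_{n<N}\|T^nu\|$, which (together with monotonicity of $\|T^nu\|$) is exactly how both you and the paper pass between mean ergodicity with trivial fixed space \eqref{it:li0} and strong stability \eqref{it:li1}; and both treat \eqref{it:li4} via the decreasing limit $h=\lim T^{*n}1$, which is a fixed point of $T^*$. The divergence is in how condition \eqref{it:li2} (stability on a single strictly positive $u$) is fed back into the cycle. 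The paper goes \eqref{it:li2}$\Rightarrow$\eqref{it:li3} by a two-line duality computation, $\int fu\,dm=\int f\,T^nu\,dm\le\|f\|_\infty\|T^nu\|\to0$, and then returns to \eqref{it:li1} through \eqref{it:li4}; the whole loop is \eqref{it:li1}$\Rightarrow$\eqref{it:li2}$\Rightarrow$\eqref{it:li3}$\Rightarrow$\eqref{it:li4}$\Rightarrow$\eqref{it:li1}, with no work done on the primal side beyond the norm identity. You instead prove \eqref{it:li2}$\Rightarrow$\eqref{it:li0} directly in $L^1$: the lattice argument $|g|\le T|g|$ with equal norms to reduce to a nonnegative fixed point, followed by the truncation $g\wedge ku\uparrow g$ to contradict $\|T^n u\|\to0$, and a closed-subspace/domination argument for mean ergodicity. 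All of these steps are sound (in particular the splitting $g=T^n(g\wedge ku)+T^n(g-g\wedge ku)$ with $\|T^n(g\wedge ku)\|\le k\|T^nu\|$ is exactly right), but they cost noticeably more than the paper's duality shortcut. What your route buys is a self-contained primal proof of \eqref{it:li0}$\Leftrightarrow$\eqref{it:li1}$\Leftrightarrow$\eqref{it:li2} that never mentions the adjoint, with \eqref{it:li3} and \eqref{it:li4} appended afterwards; what the paper's route buys is brevity, since routing through $T^*$ makes the hardest-looking implication the easiest one. One small slip in your write-up: from $Tg=g$ and $\|T|g|\|=\||g|\|$ you directly get $T|g|=|g|$ (since $T|g|-|g|\ge0$ has integral zero), which is all you need; the stronger claim that $Tg_+=g_+$ and $Tg_-=g_-$ separately is true but is a consequence of this, not the intermediate step.
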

\begin{proof}
First observe that \eqref{it:li0} is equivalent to
\begin{equation*}\label{eq:mer0}
\lim_{N\to\infty}\frac{1}{N}\sum_{n=0}^{N-1} T^n u =0\quad\text{for all}\quad u\in
\SX.
\end{equation*}
Since $T$ is a positive contraction, the sequence $(\|T^n u\|)$ is convergent for
nonnegative $u$. Thus
\[
\lim_{N\to\infty}\frac{1}{N}\bigl\|\sum_{n=0}^{N-1} T^n
u\bigr\|=\lim_{N\to\infty}\frac{1}{N}\sum_{n=0}^{N-1}\bigl\| T^n
u\bigr\|=\lim_{n\to \infty}\|T^n u\|,
\]
by additivity of the norm, which gives 
\eqref{it:li0} $\Leftrightarrow$ \eqref{it:li1}. The implications \eqref{it:li4}
$\Rightarrow$ \eqref{it:li1}
and \eqref{it:li1} $\Rightarrow$ \eqref{it:li2} are trivial. 
Now assume that  \eqref{it:li2} holds.  Let $f\in L^\infty_+$ be such that
$T^*f=f$. We have
\[
\int_E  f u\,dm=\int_E T^{*n} f u \,dm=\int_E  f T^n u\,dm\le \|f\|_\infty \|T^n
u\|,
\]
which shows that $f=0$. Finally, assume that \eqref{it:li3} holds. Since $T^*1\le
1$, the limit $h:=\lim\limits_{n\to\infty} T^{*n} 1$ exists and $T^*h=h$. Thus
$h=0$ by \eqref{it:li3}.
\end{proof}

\begin{rem}
Note that if $T$ is a positive contraction with $\Ker(I-T)=\{0\}$ then $T$ is mean
ergodic if and only if $\Ker(I-T^*)=\{0\}$, by Sine's theorem~\cite{sine70}.
\end{rem}

We now state for later use the inheritance of mean ergodicity under domination.
This is a consequence of the Yosida-Kakutani ergodic theorem (see
e.g.~\cite[Theorem VIII.3.2]{yosida78}).

\begin{prop}\label{thm:uniq} Let $T$ and $K$ be positive contractions on $\SX$
such that
\[
Tu\le Ku\quad\text{for }u\in \SX_+.
\]
If $K$ is mean ergodic  then $T$ is mean ergodic.
\end{prop}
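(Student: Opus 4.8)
The plan is to derive the conclusion from the Yosida--Kakutani mean ergodic theorem (\cite[Theorem VIII.3.2]{yosida78}): if the Ces\`aro means of an operator are uniformly bounded, then the set of vectors $u$ for which these means converge in norm coincides with the set of vectors for which the means admit a weakly convergent subsequence. Put $A_n^T=\frac1n\sum_{k=0}^{n-1}T^k$ and $A_n^K=\frac1n\sum_{k=0}^{n-1}K^k$; since $T$ and $K$ are contractions we have $\|A_n^T\|\le1$ and $\|A_n^K\|\le1$ for all $n$, so the theorem applies to each. As $A_n^T$ is linear and $\SX=\SX_+-\SX_+$, it suffices to prove that $(A_n^Tu)_n$ converges in $\SX$ for every $u\in\SX_+$.

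The first step is the pointwise bound $T^ku\le K^ku$ for all $k\ge0$ and all $u\in\SX_+$. This is trivial for $k=0$, and if it holds for $k$, then positivity of $T$ gives $T^{k+1}u=T(T^ku)\le T(K^ku)$, while the hypothesis applied to $K^ku\in\SX_+$ gives $T(K^ku)\le K(K^ku)=K^{k+1}u$; induction finishes it. Averaging yields $0\le A_n^Tu\le A_n^Ku$ for every $n$ and every $u\in\SX_+$.

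Now fix $u\in\SX_+$. Since $K$ is mean ergodic, $(A_n^Ku)_n$ converges in $\SX$, hence the set $\{A_n^Ku:n\ge1\}$ is relatively weakly compact; by the Dunford--Pettis theorem on the $\sigma$-finite space $(E,\E,m)$ this set is bounded, uniformly integrable, and tight (for every $\varepsilon>0$ there is $F\in\E$ with $m(F)<\infty$ and $\sup_n\int_{E\setminus F}A_n^Ku\,dm<\varepsilon$). Each of these three properties passes from $\{A_n^Ku\}$ to $\{A_n^Tu\}$, because $0\le A_n^Tu\le A_n^Ku$ and all three are monotone with respect to the pointwise order on $\SX_+$; hence $\{A_n^Tu:n\ge1\}$ is relatively weakly compact in $\SX$ too. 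In particular $(A_n^Tu)_n$ has a weakly convergent subsequence, and the Yosida--Kakutani theorem then gives norm convergence of $(A_n^Tu)_n$. Thus $T$ is mean ergodic on $\SX_+$, and by linearity on all of $\SX$.

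The one point that is not purely routine is the transfer of relative weak compactness under the domination: on a merely $\sigma$-finite measure space one must control tightness as well as uniform integrability, and it is precisely the monotonicity of both conditions along $0\le A_n^Tu\le A_n^Ku$ that makes this work. Everything else is the bare mean ergodic theorem together with the elementary iteration of the inequality $Tv\le Kv$.
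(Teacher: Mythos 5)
Your proof is correct and follows exactly the route the paper indicates: the paper offers no written argument beyond citing the Yosida--Kakutani mean ergodic theorem, and your write-up supplies precisely the missing details (the induction $T^ku\le K^ku$ on $\SX_+$, and the transfer of relative weak compactness of the Ces\`aro averages via domination and the Dunford--Pettis criterion). Nothing to object to.
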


A semigroup $\{S(t)\}_{t\ge 0}$ is called \emph{strongly stable} if
\[
\lim_{t\to\infty}S(t)u=0\quad\text{for all } u\in \SX.
\]
Note that a stochastic semigroup is never strongly stable. The mean ergodic
theorem for semigroups \cite[Chapter VIII.4]{yosida78} and additivity of the norm
give the following characterization (see also \cite[Theorem 2.1 and Theorem
7.7]{chilltomilov07}).
\begin{prop}\label{l:markst} Let $\{S(t)\}_{t\ge 0}$ be a substochastic semigroup on $\SX$ with
generator $A$. Then the following are equivalent:
\begin{enumerate}
\item $\{S(t)\}_{t\ge 0}$ is strongly stable. \item For every $u\in\SX_+$
\begin{equation*}\label{eq:meAbel}
 \lim_{\lambda\downarrow
0}\lambda R(\lambda,A)u=0.
\end{equation*}
\item  $\rg(A)$ is dense in
$\SX$.
\end{enumerate}
\end{prop}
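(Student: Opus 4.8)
The plan is to prove the cycle of implications (1)$\Rightarrow$(3)$\Rightarrow$(2)$\Rightarrow$(1), using only standard facts about $C_0$-semigroups together with the two features special to $\SX$ alluded to in the statement: the norm is additive on the positive cone $\SX_+$, and a substochastic semigroup satisfies $\|\lambda R(\lambda,A)\|\le 1$ for $\lambda>0$, with the Laplace representation $R(\lambda,A)u=\int_0^\infty e^{-\lambda t}S(t)u\,dt$. A preliminary reduction I would record first: since $S(t)|u|=S(t)u^++S(t)u^-\ge 0$ and $\|S(t)u\|\le\|S(t)u^+\|+\|S(t)u^-\|$, strong stability is equivalent to $\lim_{t\to\infty}\|S(t)u\|=0$ for all $u\in\SX_+$, and likewise (2) need only be tested on $\SX_+$ (by positivity of $R(\lambda,A)$); this lets me split an arbitrary $u$ into $u^{\pm}$ freely throughout.

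For (1)$\Rightarrow$(3) I would use the identity $A\bigl(\int_0^t S(s)u\,ds\bigr)=S(t)u-u$, valid for every $u\in\SX$ (not only on $\mathcal{D}(A)$). It shows $S(t)u-u\in\rg(A)$ for all $t>0$; letting $t\to\infty$ and using (1) gives $-u=\lim_{t\to\infty}(S(t)u-u)\in\overline{\rg(A)}$, and since $u$ is arbitrary and $\rg(A)$ is a subspace, $\overline{\rg(A)}=\SX$.

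For (3)$\Rightarrow$(2) I would note that for $v=Au$ with $u\in\mathcal{D}(A)$ one has $\lambda R(\lambda,A)v=\lambda R(\lambda,A)Au=\lambda A R(\lambda,A)u=\lambda\bigl(\lambda R(\lambda,A)-I\bigr)u=\lambda^2R(\lambda,A)u-\lambda u$, so $\|\lambda R(\lambda,A)v\|\le 2\lambda\|u\|\to 0$ as $\lambda\downarrow 0$, using $\|\lambda R(\lambda,A)\|\le 1$. Thus $\lambda R(\lambda,A)v\to 0$ on the dense subspace $\rg(A)$, and the uniform bound $\|\lambda R(\lambda,A)\|\le 1$ propagates this to all of $\overline{\rg(A)}=\SX$, which is in particular (2).

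For (2)$\Rightarrow$(1), fix $u\in\SX_+$. Contractivity and the semigroup law give $\|S(t+s)u\|=\|S(t)S(s)u\|\le\|S(s)u\|$, so $t\mapsto\|S(t)u\|$ is nonincreasing with a limit $c(u)\ge 0$; the goal is $c(u)=0$. Here additivity of the norm on $\SX_+$ enters: since $S(t)u\ge 0$, Tonelli's theorem gives $\|\lambda R(\lambda,A)u\|=\lambda\int_0^\infty e^{-\lambda t}\|S(t)u\|\,dt$, and the elementary Abelian theorem for the bounded, convergent function $t\mapsto\|S(t)u\|$ shows the right-hand side tends to $c(u)$ as $\lambda\downarrow 0$. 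Condition (2) forces the left-hand side to $0$, so $c(u)=0$, which by the preliminary reduction is (1). The cyclic route is self-contained and does not invoke the mean ergodic theorem for semigroups directly, though I would remark that (2) is precisely the Abel-mean form of ergodicity and, via the structure of the mean ergodic projection, again yields $\overline{\rg(A)}=\SX$. I do not anticipate a genuine obstacle here: the only slightly delicate points --- that $\int_0^t S(s)u\,ds\in\mathcal{D}(A)$, the Laplace representation of the resolvent, and the Tonelli interchange --- are standard and all remain valid for substochastic semigroups, so the work is essentially careful bookkeeping of where positivity and the contraction bound are used.
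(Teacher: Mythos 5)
Your proof is correct. The paper does not actually write out an argument for this proposition: it obtains it in one line by citing the mean ergodic theorem for semigroups (Yosida, Ch.~VIII.4) together with additivity of the norm on $\SX_+$, i.e.\ it invokes the abstract ergodic decomposition $\SX=\Ker(A)\oplus\overline{\rg(A)}$ and then uses that for $u\ge 0$ the map $t\mapsto\|S(t)u\|$ is nonincreasing, so convergence of the Abel/Ces\`aro means upgrades to convergence of $\|S(t)u\|$ itself. Your cyclic argument $(1)\Rightarrow(3)\Rightarrow(2)\Rightarrow(1)$ unpacks exactly these ingredients into a self-contained proof: the identity $A\bigl(\int_0^t S(s)u\,ds\bigr)=S(t)u-u$ replaces the appeal to the ergodic decomposition in $(1)\Rightarrow(3)$; the estimate $\|\lambda R(\lambda,A)Au\|\le 2\lambda\|u\|$ on $\rg(A)$ plus the uniform bound $\|\lambda R(\lambda,A)\|\le 1$ is the Abel-mean half of the ergodic theorem; and the Tonelli computation $\|\lambda R(\lambda,A)u\|=\lambda\int_0^\infty e^{-\lambda t}\|S(t)u\|\,dt$ combined with the elementary Abelian theorem for the monotone limit $c(u)$ is precisely where ``additivity of the norm'' enters in the paper's citation. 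What your route buys is independence from the mean ergodic theorem as a black box, making visible that only contractivity, positivity of the semigroup, and the Laplace representation of the resolvent for $\lambda>0$ (valid since the growth bound of a contraction semigroup is $\le 0$) are used; what the paper's citation buys is brevity and a pointer to the general stability literature. The steps you flag as delicate --- $\int_0^t S(s)u\,ds\in\mathcal{D}(A)$ for arbitrary $u$, the resolvent integral, and the Tonelli interchange --- are all standard and valid here, so there is no gap.
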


\section{Perturbation of substochastic semigroups}\label{ssec:MSG}

In this section we consider two linear operators $(A,\mathcal{D}(A))$ and
$(B,\mathcal{D}(B))$ in $\SX$ which are assumed throughout to have the following
properties:
\begin{enumerate}
  \item[(G1)] $(A,\mathcal{D}(A))$ generates a substochastic semigroup $\{S(t)\}_{t\ge
0}$;
  \item[(G2)] $\mathcal{D}(B)\supseteq \mathcal{D}(A)$ and $Bu\ge 0$ for $u\in
\mathcal{D}(A)_+$;
\item[(G3)] for every $u\in\mathcal{D}(A)_+$
\begin{equation}\label{eq:zero} \int_E (Au+Bu)\,dm=  0.
\end{equation}
\end{enumerate}
We refer to Sections \ref{sec:exMs} and~\ref{sec:frag} for examples of operators
satisfying (G1)--(G3).
\begin{thm}\label{thm:kato}\cite{kato54,voigt87,banasiak01}
There exists a  substochastic semigroup $\{P(t)\}_{t\ge 0}$ on $\SX$ generated by
an extension $C$ of the operator $(A+B,\mathcal{D}(A))$. The generator $C$ is
characterized by
\begin{equation}\label{eq:rp}
R(\lambda,C)u=\lim_{N\to\infty}R(\lambda,A)\sum_{n=0}^N (BR(\lambda,A))^nu, \quad
u\in \SX, \lambda>0,
\end{equation}
and $\{P(t)\}_{t\ge 0}$ is the smallest substochastic semigroup 
whose generator is an extension of $(A+B,\mathcal{D}(A))$.

Moreover, the following are equivalent:
\begin{enumerate}
\item  $\{P(t)\}_{t\ge 0}$ is a stochastic semigroup.
\item The generator $C$ is the closure of $(A+B,\mathcal{D}(A))$.
\item For some $\lambda>0$
\begin{equation}\label{eq:sspert}
\lim_{n\to\infty}\|(BR(\lambda,A))^n u\|=0\quad \text{for all}\quad u\in\SX.
\end{equation}
\end{enumerate}
\end{thm}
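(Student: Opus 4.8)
The plan is to combine the known construction of the minimal semigroup via the resolvent formula \eqref{eq:rp} with the characterizations of strong stability and mass conservation already at hand. First I would recall the standard setup behind Theorem~\ref{thm:kato}: for $\lambda>0$ the operator $\lambda - A$ has bounded inverse $R(\lambda,A)\ge 0$, and since $BR(\lambda,A)$ is a positive operator on $\SX$ one checks, integrating \eqref{eq:zero} over $E$, that $\|\lambda R(\lambda,A)u + BR(\lambda,A)u\| = \|u\|$ for $u\in\SX_+$; hence $BR(\lambda,A)$ is a positive contraction and the Neumann-type series $\sum_{n\ge 0}(BR(\lambda,A))^n u$ is monotone in $\SX_+$ with partial sums of norm at most $\|u\|/\lambda$. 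Applying $R(\lambda,A)$ termwise and passing to the limit defines a positive operator $R_\lambda u$ satisfying $\|R_\lambda u\|\le \|u\|/\lambda$; the resolvent identity for $R(\lambda,A)$ propagates to $R_\lambda$, so by the Hille--Yosida theorem (in its form for positive contraction semigroups) $R_\lambda = R(\lambda,C)$ for the generator $C$ of a substochastic semigroup $\{P(t)\}_{t\ge0}$. That $C$ extends $(A+B,\mathcal D(A))$ follows because for $u\in\mathcal D(A)$ one has $(\lambda - A - B)u \in \SX$ and $R_\lambda(\lambda - A - B)u = u$ by a telescoping argument in the series. Minimality is the statement that any substochastic semigroup whose generator extends $(A+B,\mathcal D(A))$ has resolvent dominating each partial sum $R(\lambda,A)\sum_{n=0}^N(BR(\lambda,A))^n$, hence dominating $R(\lambda,C)$, which forces the semigroup to dominate $\{P(t)\}_{t\ge0}$ pointwise on $\SX_+$ by the exponential formula; I would cite \cite{voigt87,banasiak01} for the details of this part rather than reproduce it.

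For the equivalences, the implication (2)$\Rightarrow$(1) is immediate: if $C$ is the closure of $(A+B,\mathcal D(A))$ then for $u\in\mathcal D(A)_+$ we get $\frac{d}{dt}\|P(t)u\| = \int_E (A+B)P(t)u\,dm$, which vanishes by \eqref{eq:zero} applied to $P(t)u\in\mathcal D(A)$; a density argument extends mass conservation to all of $\SX_+$, so $\{P(t)\}_{t\ge0}$ is stochastic. Conversely (1)$\Rightarrow$(3): stochasticity means $\lambda\|R(\lambda,C)u\| = \|u\|$ for $u\in\SX_+$, and since $\lambda R(\lambda,C)u = \sum_{n\ge0}\lambda R(\lambda,A)(BR(\lambda,A))^n u$ while $\|\lambda R(\lambda,A)v\| = \|v\| - \|BR(\lambda,A)v\|$ for $v\in\SX_+$, summing the telescoping identities over $n=0,\dots,N$ gives $\lambda\|R(\lambda,A)\sum_{n=0}^N(BR(\lambda,A))^n u\| = \|u\| - \|(BR(\lambda,A))^{N+1}u\|$; letting $N\to\infty$ and using stochasticity forces $\|(BR(\lambda,A))^{N+1}u\|\to0$, i.e. \eqref{eq:sspert}. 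Finally (3)$\Rightarrow$(2): from the same telescoping identity, \eqref{eq:sspert} gives $\lambda\|R(\lambda,C)u\| = \|u\|$ for $u\in\SX_+$, so $\{P(t)\}_{t\ge0}$ is stochastic; then for any substochastic semigroup $\{\tilde P(t)\}$ whose generator extends $(A+B,\mathcal D(A))$, minimality gives $\tilde P(t)u\ge P(t)u\ge0$ with $\|\tilde P(t)u\|\le\|u\| = \|P(t)u\|$, forcing $\tilde P(t)=P(t)$; applying this to the semigroup generated by the closure of $(A+B,\mathcal D(A))$ (which exists precisely because $C$ is a closed extension and a closability argument goes through under \eqref{eq:sspert}) identifies $C$ with that closure.

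The main obstacle is the last step, (3)$\Rightarrow$(2): one must argue that $(A+B,\mathcal D(A))$ is closable and that its closure generates a semigroup, so that uniqueness of the substochastic semigroup can be invoked. The cleanest route is not to prove closability directly but to show $\overline{(A+B,\mathcal D(A))} = C$ by verifying that $\mathcal D(A)$ is a core for $C$: given $u\in\mathcal D(C)$, write $u = R(\lambda,C)v$ and approximate $v$ in $\SX$ by $\lambda R(\lambda,A)\sum_{n=0}^N(BR(\lambda,A))^n v \in$ (sums of elements of the form $R(\lambda,A)w$, which lie in $\mathcal D(A)$); then the corresponding approximants converge to $u$ in graph norm precisely because the tail $\|(BR(\lambda,A))^{N}v\|$ — which controls the graph-norm error — tends to zero by \eqref{eq:sspert}. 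I would present this core argument carefully, as it is where condition \eqref{eq:sspert} does the real work, and refer to \cite{kato54,voigt87,banasiak01} for the parts already in the literature.
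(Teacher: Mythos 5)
This theorem is stated with citations to \cite{kato54,voigt87,banasiak01} and is not proved in the paper, so there is no in-text proof to compare against; the relevant benchmark is that the paper's surrounding machinery (Corollary~\ref{r:sub} and the telescoping identity \eqref{eq:normBR1} of Remark~\ref{rem:M}) is exactly the engine your argument runs on. Your reconstruction is the standard Kato--Voigt proof and is essentially correct: the norm identity $\|\lambda R(\lambda,A)u\|+\|BR(\lambda,A)u\|=\|u\|$ from \eqref{eq:zero}, the monotone Neumann-type series defining $R(\lambda,C)$, the telescoping computation giving $\lambda\|R(\lambda,A)\sum_{n=0}^N(BR(\lambda,A))^nu\|=\|u\|-\|(BR(\lambda,A))^{N+1}u\|$ for (1)$\Leftrightarrow$(3), and the core argument for (3)$\Rightarrow$(2) (where $(\lambda-C)u_N=v-(BR(\lambda,A))^{N+1}v$ makes the tail condition \eqref{eq:sspert} precisely the graph-norm convergence needed) are all the right steps.

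One small slip: in (2)$\Rightarrow$(1) you apply \eqref{eq:zero} ``to $P(t)u\in\mathcal{D}(A)$,'' but $P(t)u$ lies only in $\mathcal{D}(C)$, the domain of the closure, not in $\mathcal{D}(A)$. The fix is routine: the functional $u\mapsto\int_E(A+B)u\,dm$ vanishes on all of $\mathcal{D}(A)$ (by linearity, since $\mathcal{D}(A)=R(\lambda,A)\SX_+-R(\lambda,A)\SX_+$) and is continuous in the graph norm of $C$, hence vanishes on $\mathcal{D}(C)$ when $\mathcal{D}(A)$ is a core; then $\frac{d}{dt}\int_EP(t)u\,dm=\int_E CP(t)u\,dm=0$. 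With that repair the argument is complete.
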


The semigroup $\{P(t)\}_{t\ge 0}$ from Theorem~\ref{thm:kato} can be obtained
\cite{banasiak01,banasiakarlotti06} as the strong limit in $\SX$ of semigroups
$\{P_r(t)\}_{t\ge 0}$ generated by $(A+rB,\mathcal{D}(A))$ as $r\uparrow 1$. It
satisfies the integral equation
\begin{equation}\label{eq:df}
P(t)u=S(t)u+\int_{0}^t P(t-s) B S(s)u\,ds
\end{equation}
for any $u\in \mathcal{D}(A)$ and $t\ge 0$, where $\{S(t)\}_{t\ge 0}$ is the
semigroup generated by $(A,\mathcal{D}(A))$, and it is also given by the
Dyson-Phillips expansion
\begin{equation}\label{eq:dpf1}
P(t)u=\sum_{n=0}^\infty S_n(t)u, \quad u\in \mathcal{D}(A),\;t\ge 0,
\end{equation}
where
\begin{equation}\label{eq:dpf2}
S_0(t)u=S(t)u,\quad S_{n+1}(t)u=\int_0^tS_{n}(t-s)B S(s)u\,ds, \quad n\ge 0.
\end{equation}

Let $\lambda>0$. Since the generator $C$ of the substochastic semigroup
$\{P(t)\}_{t\ge 0}$ is such that $Cu=(A+B)u$ for $u\in \mathcal{D}(A)$, we have
\begin{equation*}
(\lambda -C)R(\lambda,A)v=(\lambda -A-B)R(\lambda,A)v=(I-BR(\lambda,A))v
\end{equation*}
for $v\in \SX$.  Thus $\Ker(I-BR(\lambda,A))\subseteq \Ker(R(\lambda,A))$ and
\begin{equation}\label{eq:sum}
BR(\lambda,A)v +\lambda R(\lambda,A)v=v+(A+B)R(\lambda,A)v\quad\text{for } v\in
\SX_+.
\end{equation}
Combining this with (G2) and (G3), we obtain the following corollary.
\begin{cor}\label{r:sub}
Let $\lambda>0$. Then
\begin{equation}\label{eq:sum1}
\|BR(\lambda,A)u\|+\|\lambda R(\lambda,A)u\|=\|u\|\quad \text{for } u\in\SX_+
\end{equation}
and $BR(\lambda,A)$ is a positive contraction with $\Ker(I-BR(\lambda,A))=\{0\}$.
\end{cor}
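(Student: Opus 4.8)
The plan is to read off \eqref{eq:sum1} directly from the identity \eqref{eq:sum} by integrating over $E$, and then to deduce the contraction and kernel-triviality statements as easy consequences. Fix $\lambda>0$ and take $u\in\SX_+$. Since $(A,\mathcal{D}(A))$ generates a substochastic semigroup, $R(\lambda,A)u\ge 0$ belongs to $\mathcal{D}(A)$, so $v=R(\lambda,A)u$ is a legitimate element of $\mathcal{D}(A)_+$ and hence of $\mathcal{D}(B)_+$ by (G2); moreover $Bv=BR(\lambda,A)u\ge 0$. Thus all three quantities $BR(\lambda,A)u$, $\lambda R(\lambda,A)u$ and $u$ appearing in \eqref{eq:sum} are nonnegative, so taking $\|\cdot\|=\int_E(\cdot)\,dm$ of both sides and using additivity of the norm on $\SX_+$ gives
\[
\|BR(\lambda,A)u\|+\|\lambda R(\lambda,A)u\|=\|u\|+\int_E (A+B)R(\lambda,A)u\,dm .
\]
The last integral vanishes: with $w:=R(\lambda,A)u\in\mathcal{D}(A)_+$, condition (G3) applied to $w$ gives $\int_E(Aw+Bw)\,dm=0$. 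This yields \eqref{eq:sum1}.

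From \eqref{eq:sum1} the remaining claims are immediate. For arbitrary $u\in\SX$, writing $u=u_+-u_-$ and using positivity of $BR(\lambda,A)$ together with \eqref{eq:sum1} applied to $u_+$ and $u_-$ separately, one gets $\|BR(\lambda,A)u\|\le\|BR(\lambda,A)u_+\|+\|BR(\lambda,A)u_-\|\le\|u_+\|+\|u_-\|=\|u\|$, so $BR(\lambda,A)$ is a contraction; positivity was already noted. For the kernel: if $BR(\lambda,A)u=0$ for some $u\in\SX_+$, then \eqref{eq:sum1} forces $\|\lambda R(\lambda,A)u\|=\|u\|$, while from \eqref{eq:sum} we also get $\lambda R(\lambda,A)u = u + (A+B)R(\lambda,A)u$ with $(A+B)R(\lambda,A)u = AR(\lambda,A)u$ (as $Bu$-term is zero), and more simply $\Ker(I-BR(\lambda,A))\subseteq\Ker(R(\lambda,A))$ was recorded just before the corollary, so $u=(\lambda-A)R(\lambda,A)u=0$. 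Hence $\Ker(I-BR(\lambda,A))=\{0\}$ for nonnegative $u$, and since a positive operator's fixed points in $\SX$ reduce to those of $|u|$ (because $|BR(\lambda,A)u|\le BR(\lambda,A)|u|$ and equality in $\|BR(\lambda,A)|u|\,\|=\||u|\|$ fails unless $|u|=0$), the kernel is trivial on all of $\SX$.

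I do not anticipate a genuine obstacle here; the only point requiring a little care is the reduction of the kernel statement from $\SX_+$ to all of $\SX$, where one must invoke the pointwise domination $|BR(\lambda,A)u|\le BR(\lambda,A)|u|$ valid for positive operators and then argue that $(I-BR(\lambda,A))u=0$ forces $(I-BR(\lambda,A))|u|=0$ by comparing norms via \eqref{eq:sum1}. Everything else is bookkeeping with (G1)--(G3) and the already-established inclusion $\Ker(I-BR(\lambda,A))\subseteq\Ker(R(\lambda,A))$.
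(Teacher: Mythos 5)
Your argument is correct and follows the paper's route exactly: integrate \eqref{eq:sum} over $E$ and invoke (G3) with $w=R(\lambda,A)u\in\mathcal{D}(A)_+$ to obtain \eqref{eq:sum1}, then read off the contraction property from \eqref{eq:sum1} and kernel-triviality from the inclusion $\Ker(I-BR(\lambda,A))\subseteq\Ker(R(\lambda,A))$ together with $u=(\lambda-A)R(\lambda,A)u$. One slip in your kernel paragraph: you write ``if $BR(\lambda,A)u=0$'' where membership in $\Ker(I-BR(\lambda,A))$ means $BR(\lambda,A)u=u$; the clause you label ``more simply'' is the correct and complete argument, and since it already applies to every $u\in\SX$, the subsequent reduction from $\SX_+$ to $\SX$ via $|BR(\lambda,A)u|\le BR(\lambda,A)|u|$ is unnecessary (though valid).
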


\begin{rem}\label{rem:M}
Note that if $u\in\SX_+$ then for each $N\ge 0$
\begin{equation}\label{eq:normBR1}
\lambda\|R(\lambda, A)\sum_{n=0}^N (BR(\lambda,A))^nu\|=\|u\|-\|(BR(\lambda,
A))^{N+1}u\|.
\end{equation}
In fact, since $R(\lambda,A)v\in \mathcal{D}(A)_+$ for $v\in \SX_+$, we obtain, by
\eqref{eq:sum} and \eqref{eq:zero},
\begin{equation*}\label{eq:normBR}
\lambda\int_ER(\lambda, A)v dm=\int_E(v-BR(\lambda, A)v) dm,
\end{equation*}
which gives \eqref{eq:normBR1} for $v=\sum_{n=0}^N (BR(\lambda,A))^nu$.
\end{rem}

We have the following result for stochastic semigroups.
\begin{thm}
\label{prop:pert} Let $\lambda>0$. The following are equivalent:
\begin{enumerate}
\item\label{it:pert1} $\{P(t)\}_{t\ge 0}$ is a stochastic semigroup. 
\item\label{it:pert2} The operator $BR(\lambda,A)$ is mean ergodic.
\item\label{it:pert4}  $m\{x\in E:f_\lambda(x)>0\}=0$, 
where
\begin{equation}\label{def:fl}
f_\lambda(x)=\lim_{n\to\infty}(BR(\lambda,A))^{*n}1(x).
\end{equation}
\item\label{it:pert3} There is $u\in \SX_+$, $u>0$ a.e. such that 
\begin{equation*}\label{eq:css1}
\lim_{n\to\infty}\|(BR(\lambda,A))^nu\|=0.
\end{equation*}
\end{enumerate}
\end{thm}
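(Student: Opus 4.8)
The plan is to prove Theorem~\ref{prop:pert} by combining Corollary~\ref{r:sub}, Theorem~\ref{thm:kato}, and the characterization of strongly stable positive contractions in Proposition~\ref{thm:mergeq}. The key observation is that $T:=BR(\lambda,A)$ is, by Corollary~\ref{r:sub}, a positive contraction on $\SX$ with $\Ker(I-T)=\{0\}$; hence Proposition~\ref{thm:mergeq} applies directly to $T$. In that proposition, condition \eqref{it:li0} is \enquote{$T$ mean ergodic and $\Ker(I-T)=\{0\}$}, which in our case reduces to just \enquote{$T$ mean ergodic}; condition \eqref{it:li1} is strong stability of $T$, i.e. $\|T^nu\|\to 0$ for all $u\in\SX$; condition \eqref{it:li2} is the same limit for a single strictly positive $u$; and condition \eqref{it:li4} reads $T^{*n}1\to 0$ a.e., which is exactly $f_\lambda=0$ a.e., i.e. $m\{f_\lambda>0\}=0$. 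So items \eqref{it:pert2}, \eqref{it:pert4}, \eqref{it:pert3} of the present theorem are literally the translations of items \eqref{it:li0}, \eqref{it:li4}, \eqref{it:li2} of Proposition~\ref{thm:mergeq} for this particular $T$, and their mutual equivalence is immediate once one notes that $f_\lambda$ is well defined: since $T^*1\le 1$ (as $T$ is a positive contraction, so $\|T^*\|\le 1$ on $\SXI$ and $T^*$ is positive), the sequence $(T^{*n}1)$ is nonincreasing and nonnegative, hence converges pointwise a.e. to some $f_\lambda\in\SXI_+$ with $T^*f_\lambda=f_\lambda$.

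It remains to link these to item \eqref{it:pert1}, that $\{P(t)\}_{t\ge0}$ is stochastic. Here I would simply invoke the equivalence (1)$\Leftrightarrow$(3) in Theorem~\ref{thm:kato}: $\{P(t)\}_{t\ge0}$ is stochastic if and only if $\lim_{n\to\infty}\|(BR(\mu,A))^nu\|=0$ for all $u\in\SX$ for \emph{some} $\mu>0$. This is precisely strong stability of $BR(\mu,A)$ for some $\mu$, i.e. condition \eqref{it:li1} of Proposition~\ref{thm:mergeq} for $T_\mu:=BR(\mu,A)$. Thus $\{P(t)\}_{t\ge0}$ stochastic $\iff$ $BR(\mu,A)$ strongly stable for some $\mu>0$ $\iff$ (by the equivalences just established, applied at that $\mu$) $BR(\mu,A)$ mean ergodic for some $\mu>0$. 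What I actually need is that these hold for the \emph{given} $\lambda$, so the real content is the independence of the condition on the parameter: if $BR(\mu,A)$ is strongly stable for one $\mu>0$ then $BR(\lambda,A)$ is strongly stable for every $\lambda>0$.

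That parameter-independence is the step I expect to be the main obstacle, and the place where a genuine argument (rather than a quotation) is needed. The natural route is through the resolvent: by \eqref{eq:rp} and Remark~\ref{rem:M}, for $u\in\SX_+$ one has $\lambda\|R(\lambda,A)\sum_{n=0}^N(BR(\lambda,A))^nu\| = \|u\|-\|(BR(\lambda,A))^{N+1}u\|$, so letting $N\to\infty$ gives $\lambda\|R(\lambda,C)u\| = \|u\| - \lim_{n}\|(BR(\lambda,A))^nu\|$; hence $BR(\lambda,A)$ is strongly stable on all of $\SX_+$ (equivalently on $\SX$) if and only if $\lambda R(\lambda,C)$ is stochastic, i.e. $\|R(\lambda,C)u\| = \|u\|/\lambda$ for all $u\in\SX_+$. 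But the latter is the statement that the substochastic semigroup $\{P(t)\}_{t\ge0}$ generated by $C$ is stochastic (integrate $\lambda R(\lambda,C)u = \int_0^\infty \lambda e^{-\lambda t}P(t)u\,dt$ against $m$ and use $t\mapsto\|P(t)u\|$ nonincreasing), and this characterization of stochasticity does not depend on $\lambda$. So the cleanest presentation is: first record $f_\lambda$'s existence and that items \eqref{it:pert2}, \eqref{it:pert4}, \eqref{it:pert3} are equivalent to strong stability of $BR(\lambda,A)$ via Proposition~\ref{thm:mergeq}; then show via \eqref{eq:normBR1} that strong stability of $BR(\lambda,A)$ holds for the given $\lambda$ iff $\|P(t)u\|$ is conserved, which by Theorem~\ref{thm:kato} is iff $\{P(t)\}_{t\ge0}$ is stochastic; this closes the loop and automatically yields the $\lambda$-independence as a byproduct.
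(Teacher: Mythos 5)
Your proposal is correct and follows essentially the same route as the paper: Corollary~\ref{r:sub} makes $BR(\lambda,A)$ a positive contraction with trivial fixed space, so Proposition~\ref{thm:mergeq} identifies \eqref{it:pert2}, \eqref{it:pert4}, \eqref{it:pert3} with strong stability of $BR(\lambda,A)$, and the link to \eqref{it:pert1} goes through \eqref{eq:normBR1} together with \eqref{eq:rp} (for the direction from stochasticity) and condition~\eqref{eq:sspert} of Theorem~\ref{thm:kato} (for the converse). The $\lambda$-independence you flag as the main obstacle is handled in the paper exactly as in your closing paragraph — it falls out automatically because the forward implication is proved for an arbitrary fixed $\lambda$ while \eqref{eq:sspert} only requires \emph{some} $\lambda$ — so there is no gap.
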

\begin{proof}
By Corollary~\ref{r:sub}, the operator $BR(\lambda,A)$ is a positive contraction
with $\Ker(I-BR(\lambda,A))=\{0\}$. First assume that \eqref{it:pert1} holds.
Since the operator $\lambda R(\lambda,C)$ is stochastic, we have $\|\lambda
R(\lambda,C)u\|=\|u\|$ for $u\in\SX_+$. Hence \eqref{it:pert3} follows from
\eqref{eq:rp} and \eqref{eq:normBR1}. The implications \eqref{it:pert3}
$\Rightarrow$ \eqref{it:pert4} $\Rightarrow$ \eqref{it:pert2} $\Rightarrow$
\eqref{it:pert1} follow from Proposition~\ref{thm:mergeq} and
condition~\eqref{eq:sspert}.
\end{proof}

Next, we consider strong stability.
\begin{thm}\label{thm:sst}
The semigroup $\{P(t)\}_{t\ge 0}$ is strongly stable if and only if
\[
m\{x\in E:\liminf_{\lambda\downarrow 0}f_\lambda (x)<1\}=0,
\]
where $f_\lambda$ is defined in \eqref{def:fl}.
\end{thm}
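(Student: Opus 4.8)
\emph{Plan of proof.} The idea is to express the Abel average $\lambda R(\lambda,C)$ in terms of $f_\lambda$ and then let $\lambda\downarrow 0$, using Proposition~\ref{l:markst} to detect strong stability. Write $K_\lambda:=BR(\lambda,A)$; by Corollary~\ref{r:sub} it is a positive contraction on $\SX$ with $\Ker(I-K_\lambda)=\{0\}$, and since $K_\lambda^*1\le 1$ in $\SXI$ the sequence $K_\lambda^{*n}1$ decreases, so $f_\lambda=\lim_{n\to\infty}K_\lambda^{*n}1$ exists a.e.\ with $0\le f_\lambda\le 1$. The first step is the identity
\[
\lambda\|R(\lambda,C)u\|=\int_E u\,(1-f_\lambda)\,dm,\qquad u\in\SX_+,\ \lambda>0 .
\]
To obtain it I would let $N\to\infty$ in \eqref{eq:normBR1}: by \eqref{eq:rp} the partial sums $R(\lambda,A)\sum_{n=0}^N K_\lambda^n u$ converge in $\SX$ to $R(\lambda,C)u$, so their norms converge to $\|R(\lambda,C)u\|$, while $\|K_\lambda^{N+1}u\|=\int_E u\,K_\lambda^{*(N+1)}1\,dm\to\int_E u\,f_\lambda\,dm$ by dominated convergence ($0\le K_\lambda^{*n}1\le 1$, $u\in\SX$). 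The displayed identity follows since $\|u\|=\int_E u\,dm$.

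\emph{Monotonicity of $f_\lambda$ in $\lambda$.} For $0<\lambda<\mu$ the resolvent inequality $R(\lambda,A)u\ge R(\mu,A)u\ge 0$ for $u\in\SX_+$ (recalled in Section~\ref{sec:pre}) together with positivity of $B$ on $\mathcal{D}(A)$ gives $0\le K_\mu u\le K_\lambda u$ for $u\in\SX_+$, hence, by induction, $0\le K_\mu^n u\le K_\lambda^n u$; integrating and using the adjoint, $\int_E u\,K_\mu^{*n}1\,dm\le\int_E u\,K_\lambda^{*n}1\,dm$ for all $u\in\SX_+$, whence $K_\mu^{*n}1\le K_\lambda^{*n}1$ a.e., and so $f_\mu\le f_\lambda$ a.e.\ after letting $n\to\infty$. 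Thus $\lambda\mapsto f_\lambda$ is nondecreasing as $\lambda\downarrow 0$, the pointwise limit $f_0:=\lim_{\lambda\downarrow 0}f_\lambda=\sup_{\lambda>0}f_\lambda$ exists a.e., and, since $f_0\le 1$, one has $\liminf_{\lambda\downarrow 0}f_\lambda=f_0$ a.e.

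\emph{Letting $\lambda\downarrow 0$.} Applying Proposition~\ref{l:markst} to $\{P(t)\}_{t\ge 0}$, whose generator is $C$, strong stability is equivalent to $\|\lambda R(\lambda,C)u\|\to 0$ as $\lambda\downarrow 0$ for every $u\in\SX_+$. By the identity above this quantity equals $\int_E u\,(1-f_\lambda)\,dm$, and the integrands $u\,(1-f_\lambda)$ decrease (as $\lambda\downarrow 0$) to $u\,(1-f_0)$ while being dominated by $u\in\SX$; hence $\lim_{\lambda\downarrow 0}\int_E u\,(1-f_\lambda)\,dm=\int_E u\,(1-f_0)\,dm$ by monotone convergence. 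Consequently $\{P(t)\}_{t\ge 0}$ is strongly stable iff $\int_E u\,(1-f_0)\,dm=0$ for all $u\in\SX_+$, which (choosing $u>0$ a.e., available since $m$ is $\sigma$-finite) holds iff $f_0=1$ a.e., i.e., by the monotonicity established above, iff $m\{x\in E:\liminf_{\lambda\downarrow 0}f_\lambda(x)<1\}=0$.

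\emph{Main obstacle.} The delicate point is the first step: one must take the limit $N\to\infty$ before doing anything else (to identify both $R(\lambda,C)u$ via \eqref{eq:rp} and $\lim_N\|K_\lambda^N u\|$), and be careful to apply monotone/dominated convergence to the genuinely monotone sequences, namely $K_\lambda^{*n}1\downarrow f_\lambda$ in $n$ and $u\,(1-f_\lambda)\downarrow u\,(1-f_0)$ as $\lambda\downarrow 0$; it is the monotonicity of $f_\lambda$ in $\lambda$ that lets the $\liminf$ in the statement be read as an honest limit. With those in hand the remainder is bookkeeping with positivity and with Corollary~\ref{r:sub}, Remark~\ref{rem:M}, and Proposition~\ref{l:markst}.
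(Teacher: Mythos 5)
Your proof is correct and follows essentially the same route as the paper: the identity $\lambda\|R(\lambda,C)u\|=\|u\|-\|f_\lambda u\|$ obtained from \eqref{eq:rp} and \eqref{eq:normBR1} by letting $N\to\infty$, the monotonicity of $f_\lambda$ as $\lambda\downarrow 0$ coming from the resolvent inequality, and the appeal to Proposition~\ref{l:markst}. You simply spell out the convergence arguments that the paper leaves implicit.
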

\begin{proof}
It follows from \eqref{eq:normBR1} and the monotone convergence theorem that
\[
\|\lambda R(\lambda,C)u\|=\|u\|-\|f_\lambda u\|\quad \text{for}\quad u\in\SX_+.
\]
Since $f_\lambda\le 1$ for all $\lambda>0$ and $\|f_\mu u\|\le \|f_\lambda u\|$
for $\mu>\lambda$ and all $u\in\SX_+$, the claim follows from
Proposition~\ref{l:markst}.
\end{proof}

We now prove the following general result which provides another
sufficient condition for $\{P(t)\}_{t\ge 0}$ to be
stochastic.

\begin{thm}\label{thm:opMarkov}
Define the operator $K\colon \SX\to \SX$ by
\begin{equation}\label{eq:K0}
Ku=\lim_{\lambda\downarrow 0} BR(\lambda,A)u\quad \text{for } u\in\SX.
\end{equation}
Then the following hold:
\begin{enumerate}
\item\label{it:tm1} $K$ is a positive contraction.
\item\label{it:tm3} $K$ is stochastic  if and only if the semigroup
$\{S(t)\}_{t\ge 0}$ generated by $A$ is strongly stable.
\item\label{it:tm4} If $K$ is mean ergodic then $\{P(t)\}_{t\ge 0}$ is stochastic.
\end{enumerate}
\end{thm}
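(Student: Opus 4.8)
The plan is to establish the three claims in order, leaning heavily on the monotonicity of the resolvent and on the characterizations already proved.

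\medskip

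\noindent\textbf{Proof of \eqref{it:tm1}.} First I would check that the limit defining $K$ exists. For $u\in\SX_+$ we have $R(\lambda,A)u\ge R(\mu,A)u\ge 0$ for $0<\lambda<\mu$ (resolvent monotonicity for generators of substochastic semigroups, recalled in Section~\ref{sec:pre}), so $BR(\lambda,A)u$ is increasing as $\lambda\downarrow 0$, by (G2). By Corollary~\ref{r:sub}, $\|BR(\lambda,A)u\|\le\|u\|$, so the increasing family $BR(\lambda,A)u$ is bounded in $\SX$; monotone convergence gives a limit $Ku\in\SX_+$ with $\|Ku\|\le\|u\|$. Extending to general $u$ by linearity ($u=u^+-u^-$) defines $K$, and positivity and the contraction bound are immediate. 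One should note $K$ is genuinely linear and bounded because the convergence is in norm on the positive cone and hence everywhere.

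\medskip

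\noindent\textbf{Proof of \eqref{it:tm3}.} The key identity is \eqref{eq:sum1}: for $u\in\SX_+$,
\[
\|BR(\lambda,A)u\|+\|\lambda R(\lambda,A)u\|=\|u\|.
\]
Letting $\lambda\downarrow 0$ and using norm-convergence of $BR(\lambda,A)u\to Ku$ gives
\[
\|Ku\|=\|u\|-\lim_{\lambda\downarrow 0}\|\lambda R(\lambda,A)u\|.
\]
By Proposition~\ref{l:markst} (equivalence of strong stability of $\{S(t)\}$ with $\lambda R(\lambda,A)u\to 0$ for all $u\in\SX_+$), the right-hand side equals $\|u\|$ for every $u\in\SX_+$ if and only if $\{S(t)\}_{t\ge 0}$ is strongly stable. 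Since $K$ is a positive contraction, $\|Ku\|=\|u\|$ for all $u\in\SX_+$ is exactly the statement that $K$ is stochastic. This gives the equivalence.

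\medskip

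\noindent\textbf{Proof of \eqref{it:tm4}.} Here the plan is to dominate an ergodic average of $BR(\lambda,A)$ by one of $K$ and invoke Proposition~\ref{thm:uniq}, then conclude via Theorem~\ref{prop:pert}. Fix $\lambda>0$. Since $R(\lambda,A)u\le R(\mu,A)u$ for $0<\mu<\lambda$ and $u\in\SX_+$, and $R(\mu,A)u\uparrow$ as $\mu\downarrow0$ with limit dominating $R(\lambda,A)u$, we get $BR(\lambda,A)u\le Ku$ for all $u\in\SX_+$; hence $(BR(\lambda,A))^nu\le K^nu$ by positivity and induction, and therefore the Ces\`aro averages satisfy $\frac1N\sum_{n=0}^{N-1}(BR(\lambda,A))^nu\le\frac1N\sum_{n=0}^{N-1}K^nu$ for $u\in\SX_+$. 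If $K$ is mean ergodic, Proposition~\ref{thm:uniq} (inheritance of mean ergodicity under domination, with $T=BR(\lambda,A)$ and the averaging operators playing the role there) shows $BR(\lambda,A)$ is mean ergodic. By Theorem~\ref{prop:pert}, $\{P(t)\}_{t\ge 0}$ is stochastic.

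\medskip

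\noindent The main obstacle I anticipate is the domination step in \eqref{it:tm4}: one must be careful that ``$BR(\lambda,A)u\le Ku$'' really holds pointwise a.e.\ for $u\in\SX_+$ (it does, since $R(\lambda,A)u\le R(\mu,A)u\le\lim_{\mu\downarrow0}R(\mu,A)u$ and $B$ is positive on $\mathcal D(A)_+$, and $K u=\lim BR(\mu,A)u$), and then that iterated positivity preserves the inequality — straightforward, but it is the place where the argument could go wrong if the convergence in \eqref{eq:K0} were only weak rather than in $\SX$-norm. The monotone convergence argument in \eqref{it:tm1} is precisely what secures norm-convergence, so \eqref{it:tm1} must be proved first and used throughout.
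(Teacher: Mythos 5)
Your proposal is correct and follows essentially the same route as the paper: monotone convergence of $BR(\lambda,A)u$ along the resolvent monotonicity to define $K$ as a positive contraction, the identity \eqref{eq:sum1} combined with Proposition~\ref{l:markst} for part~\eqref{it:tm3}, and the domination $BR(\lambda,A)\le K$ together with Proposition~\ref{thm:uniq} and Theorem~\ref{prop:pert} for part~\eqref{it:tm4}. The extra care you take with the pointwise a.e.\ domination and norm convergence is exactly what the paper relies on implicitly.
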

\begin{proof}
We have $\|B R(\lambda,A)u\|\le \|u\|$ for $u\in\SX_+$ and $0\le B R(\mu,A)u\le B
R(\lambda,A)u$ for $\mu>\lambda$, $u\in\SX_+$. Thus the limit
$\lim_{\lambda\downarrow 0}B R(\lambda,A)u$ exists and $\|\lim_{\lambda\downarrow
0}B R(\lambda,A)u\|=\lim_{\lambda\downarrow 0}\|B R(\lambda,A)u\|$ for
$u\in\SX_+$, by the monotone convergence theorem. Since the cone $\SX_+$ is
generating, i.e.~$\SX=\SX_+-\SX_+$, $K$ is a well defined positive contraction.
 From \eqref{eq:sum1} it follows that
\[
\|K u\|=\|u\|-\lim_{\lambda\downarrow 0}\lambda\|R(\lambda,A)u\|\quad \text{for }
u\in\SX_+,
\]
which implies \eqref{it:tm3}, by Proposition~\ref{l:markst}. Since $B
R(\lambda,A)\le K$ for $\lambda>0$, claim \eqref{it:tm4} is a consequence of
Proposition~\ref{thm:uniq} and Theorem~\ref{prop:pert}.
\end{proof}

The semigroup $\{P(t)\}_{t\ge 0}$ dominates $\{S(t)\}_{t\ge 0}$. By
part~\eqref{it:tm3} of~Theorem~\ref{thm:opMarkov}, we obtain 
 the
following necessary condition for $\{P(t)\}_{t\ge 0}$ to be strongly stable.

\begin{cor}
If the semigroup $\{P(t)\}_{t\ge 0}$ is strongly stable then the operator $K$
defined by \eqref{eq:K0} is stochastic.
\end{cor}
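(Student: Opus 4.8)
The plan is to derive this as a direct consequence of part~\eqref{it:tm3} of Theorem~\ref{thm:opMarkov} together with the domination of $\{S(t)\}_{t\ge 0}$ by $\{P(t)\}_{t\ge 0}$. First I would recall why the domination holds: from the Dyson--Phillips expansion \eqref{eq:dpf1}--\eqref{eq:dpf2}, each term $S_n(t)$ is a positive operator (since $B$ is positive on $\mathcal{D}(A)_+$ and $\{S(t)\}_{t\ge 0}$ is substochastic), so $P(t)u = \sum_{n=0}^\infty S_n(t)u \ge S_0(t)u = S(t)u$ for $u\in\mathcal{D}(A)_+$, and by density of $\mathcal{D}(A)$ in $\SX$ and positivity, $P(t)u\ge S(t)u$ for all $u\in\SX_+$.

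Next I would use this domination to transfer strong stability: if $\{P(t)\}_{t\ge 0}$ is strongly stable, then for $u\in\SX_+$ we have $0\le S(t)u\le P(t)u$, hence $\|S(t)u\|\le\|P(t)u\|\to 0$ as $t\to\infty$; since $\SX_+$ generates $\SX$, this gives $\lim_{t\to\infty}S(t)u=0$ for all $u\in\SX$, i.e.\ $\{S(t)\}_{t\ge 0}$ is strongly stable. Finally, by part~\eqref{it:tm3} of Theorem~\ref{thm:opMarkov}, strong stability of $\{S(t)\}_{t\ge 0}$ is equivalent to $K$ being stochastic, which is exactly the desired conclusion.

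I do not expect any real obstacle here; the corollary is essentially a one-line deduction once the two ingredients—domination and Theorem~\ref{thm:opMarkov}\eqref{it:tm3}—are in place. The only point requiring a modicum of care is making the domination $P(t)\ge S(t)$ explicit (it is already asserted in the sentence preceding the corollary in the excerpt, so it may simply be cited), and then noting that norm convergence to zero on the positive cone extends to all of $\SX$ by the decomposition $u=u_+-u_-$.

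\begin{proof}
As noted above, the Dyson--Phillips expansion \eqref{eq:dpf1}--\eqref{eq:dpf2} shows that $P(t)u\ge S(t)u\ge 0$ for every $u\in\mathcal{D}(A)_+$, and hence, by the density of $\mathcal{D}(A)$ in $\SX$ and the continuity and positivity of the operators involved, for every $u\in\SX_+$. Thus, if $\{P(t)\}_{t\ge 0}$ is strongly stable, then for $u\in\SX_+$ we have $\|S(t)u\|\le\|P(t)u\|\to 0$ as $t\to\infty$. Since $\SX=\SX_+-\SX_+$, it follows that $\lim_{t\to\infty}S(t)u=0$ for all $u\in\SX$, that is, $\{S(t)\}_{t\ge 0}$ is strongly stable. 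By part~\eqref{it:tm3} of Theorem~\ref{thm:opMarkov}, the operator $K$ defined by \eqref{eq:K0} is then stochastic.
\end{proof}
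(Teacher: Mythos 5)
Your proof is correct and follows exactly the paper's route: the paper also deduces the corollary from the domination $P(t)\ge S(t)$ (stated in the sentence preceding the corollary) combined with part~(2) of Theorem~\ref{thm:opMarkov}. You have merely spelled out the domination and the transfer of strong stability in more detail than the paper does.
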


\section{Evolution equation}\label{sec:exMs}

In this section we introduce an abstract setting in which the evolution equations
for densities of PDMPs can be studied. Let $P$ be a stochastic operator on $\SX$,
$\varphi\colon E\to[0,\infty)$ be a measurable function,  and let
\[
\SXP=\{u\in \SX:\int_E\varphi(x)|u(x)|m(dx)<\infty\}.
\]
We assume that $\{S(t)\}_{t\ge 0}$ is a substochastic semigroup on $\SX$ with
generator $(A,\mathcal{D}(A))$ such that
\begin{equation}\label{eq:defA}
\mathcal{D}(A)\subseteq \SXP\quad \text{and}\quad \int_EAu\, dm=-\int_E\varphi u\,
dm\quad \text{for}\quad u\in \mathcal{D}(A)_+.
\end{equation}
\begin{rem}\label{r:genA}
Note that \eqref{eq:defA} holds if
\begin{equation*}\label{e:A}
Au=A_0u-\varphi u\quad \text{for}\quad u\in \mathcal{D}(A)\subseteq
\mathcal{D}(A_0)\cap \SXP,
\end{equation*}
where $(A_0,\mathcal{D}(A_0))$ is the generator of a stochastic semigroup.
\end{rem}

Define the operator $B$ by $Bu=P(\varphi u)$, $u\in \SXP$. Since $P$ is positive
and $\|P(\varphi u)\|=\|\varphi u\|$ for $u\in \mathcal{D}(A)_+$, it follows from
\eqref{eq:defA} that the operators $(A,\mathcal{D}(A))$ and $(B,\SXP)$ satisfy the
assumptions (G1)--(G3) of Section~\ref{ssec:MSG} and, by Theorem~\ref{thm:kato},
there exists a smallest substochastic semigroup $\{P(t)\}_{t\ge 0}$ with generator
$(C,\mathcal{D}(C))$ which is an extension of the operator
\begin{equation}\label{eq:defC}
\mathcal{C}u=Au+P(\varphi u)\quad \text{for}\quad u\in\mathcal{D}(A).
\end{equation}
Since $(C,\mathcal{D}(C))$ is the generator of $\{P(t)\}_{t\ge 0}$, the Cauchy
problem
\[
u'(t)=C u(t),\quad t\ge 0, \quad u(0)=u_0,
\]
possesses a unique classical solution for all $u_0\in\mathcal{D}(C)$, which is
given by $u(t)=P(t)u_0\in \mathcal{D}(C)$. However, as we do not know the operator
$C$, we should rather work with the equation
\begin{equation*}
u'(t)=\mathcal{C} u(t),\quad \text{where}\quad  \mathcal{C} u=\mathcal{A} u + P(\varphi u) 
\end{equation*}
and  $\mathcal{A}$ and $P$ are extensions of the operators $A$ and $P$ such that
$\mathcal{D}(C)\subseteq \mathcal{D}(\mathcal{C})$. The existence of such
extensions follows from the construction of \cite[Section 2]{arlotti91} (see
\cite[Section 6.3]{banasiakarlotti06} for more details) which we now reformulate
in terms of the operators that appear in \eqref{eq:defC}.

We denote by $L=L(E,\E,m)$ the space of equivalent classes of all measurable
$[-\infty,\infty]$-valued function on $E$ and by $L^0$ the subspace of $L$
consisting of all elements which are finite almost everywhere. If $0\le u_n\le
u_{n+1}$, $u_n\in\SX$, $n\in\mathbb{N}$, then the pointwise almost everywhere
limit of $u_n$ exists and will be denoted by $\sup_n u_n$, so that $\sup_n u_n\in
L$. If $T$ is a positive bounded linear operator, it may be extended pointwise and
linearly beyond the space $\SX$ in the following way: if  $u\in L_+$ then we
define
\[
Tu=\sup_n Tu_n \quad \text{for }  u=\sup_n u_n, u_n\in\SX_+
\]
(note that $Tu$ is independent of the particular approximating sequence $u_n$),
and if $u\in L$ is such that $T|u|\in L^0$ then we set $Tu=Tu_{+}-Tu_{-}$. Since
$R(1,A)$ and $P$ are positive contractions, they have pointwise extensions, which
will be denoted  in what follows by $R(1,A)$ and $P$.

Let
\[
\F=\{u\in L: R(1,A)|u|\in\SX\}\quad\text{and}\quad R_1u=R(1,A)u \quad \text{for }
u\in \F.
\]
Then $\F\subset L^0$ and the operator $R_1\colon\F\to L^1$ is
one-to-one~\cite[Lemma 3.1]{banasiak01}. We can
 define the operator $\mathcal{A}\colon\mathcal{D}(\mathcal{A})\to L^0$ by
\begin{equation}\label{def:A}
\mathcal{A}u=u-R_1^{-1}u\quad \text{for } u\in
\mathcal{D}(\mathcal{A}):=\{R_1v:v\in\F\}
\end{equation}
and the operator $\mathcal{B}\colon \mathcal{D}(\mathcal{B})\to L^0$ by
\begin{equation*}
\mathcal{B}u=P(\varphi u)\quad \text{for } u\in \mathcal{D}(\mathcal{B}):=\{u\in
\SX: P(\varphi |u|)\in L^0\}.
\end{equation*}
Since $\SXP\subset \mathcal{D}(\mathcal{B})$ and $\mathcal{A}$ is
an extension of $(A,\mathcal{D}(A))$,   the operator
$\mathcal{C}\colon\mathcal{D}(\mathcal{C})\to \SX$ given by
\begin{equation*}\label{eq:opMax}
\mathcal{C}u=\mathcal{A}u+\mathcal{B}u\quad \text{for } u\in
\mathcal{D}(\mathcal{C})=\{u\in \mathcal{D}(\mathcal{A})\cap
\mathcal{D}(\mathcal{B}): \mathcal{C}u\in \SX\}
\end{equation*}
is an extension of the operator $(\mathcal{C},\mathcal{D}(A))$ defined by
\eqref{eq:defC}. Theorem~1 of~\cite{arlotti91} characterizes the generator
$(C,\mathcal{D}(C))$ of the semigroup $\{P(t)\}_{t\ge 0}$ in the following way:
\[
Cu=\mathcal{C}u \quad \text{for } u\in
\mathcal{D}(C)=\{u\in\mathcal{D}(\mathcal{C}):\lim_{n\to\infty}\|(R_1
\mathcal{B})^n u\|=0\}.
\]
Since $(C,\mathcal{D}(C))$ is a closed extension of
$\mathcal{C}_{|\mathcal{D}(A)}$, we obtain
\[
\mathcal{D}(\overline{\mathcal{C}_{|D(A)}})\subseteq \mathcal{D}(C)\subseteq
\mathcal{D}(\mathcal{C}).
\]
Consequently, if $u_0\in \mathcal{D}(C)\cap D(m)$ then the equation
\begin{equation}\label{eq:evd1}
u'(t)=\mathcal{C} u(t),\quad t\ge 0,\quad u(0)=u_0,
\end{equation}
has a nonnegative strongly differentiable solution  $u(t)$ which is given by
$u(t)=P(t)u_0$ for $t\ge 0$ and if $\{P(t)\}_{t\ge0}$ is stochastic then this
solution is unique in $D(m)$. Recall that $D(m)$ is the set of densities.

\begin{rem}
Suppose that the operator $P$ has kernel $p$. Then for every $u\in\SX_+$ we obtain
\[
P(\varphi u)(x)=\int_{E} p(x,y)\varphi(y)u(y)m(dy),
\]
by the monotone convergence theorem.

If $Au=-\varphi u$ for $u\in\SXP$ then
\[
\F=\{u\in L^0: \frac{u}{1+\varphi}\in\SX\}\quad\text{and}\quad
\mathcal{A}u=-\varphi u\quad \text{for}\quad u\in\mathcal{D}(\mathcal{A})=\SX.
\]
\end{rem}

\section{Piecewise deterministic Markov processes}\label{sec:PDP}

\subsection{Construction}\label{sec:PDPc}
Let $E$ be a Borel subset of a Polish space (separable complete metric space) and
let $\B(E)$ be the Borel $\sigma$-algebra. We consider three local characteristics
$(\pi,\varphi,\mathcal{J})$:
\begin{enumerate}
\item A \emph{semidynamical system} $\pi\colon\mathbb{R}_+\times E\to E$ on
$E$, i.e. $\pi_0x=x$, $\pi_{t+s}x=\pi_t(\pi_sx)$ for  $x\in E$, $s,t\in\mathbb{R}_+$,  
and the mapping $(t,x)\mapsto \pi_tx$ is continuous \cite[Section 7.2]{almcmbk94}. 
\item\label{it:jrf}
A \emph{jump rate function} $\varphi\colon E\to\mathbb{R}_+$ which is Borel
measurable and  such that for every $x\in E$, $t>0$, the function $s\mapsto
\varphi(\pi_sx)$ is integrable on $[0,t)$. We additionally assume that
\begin{equation}\label{eq:phi2}
\lim_{t\to\infty} \int_{0}^{t}\varphi(\pi_s x)ds=+\infty\quad \text{for all } x\in
E.
\end{equation}
\item\label{it:skJ} A \emph{jump distribution}
$\mathcal{J}\colon E\times \B(E)\to[0,1]$ which is a stochastic transition kernel
such that $\mathcal{J}(x,\{x\})=0$ for all $x\in E$.
\end{enumerate}
The local characteristics $(\pi,\varphi,\mathcal{J})$ determine a piecewise
deterministic Markov process $\{X(t)\}_{t\ge 0}$ (PDMP) on $E$ (see
e.g.~\cite{davis84,davis93,jacod96}). Define the function
\[
\Phi_x(t)=1-e^{-\phi_x(t)},\quad t>0, x\in E, \quad \text{where }
\phi_x(t)=\int_{0}^{t}\varphi(\pi_s x)ds.
\]
From \eqref{it:jrf} it follows that for every $x\in E$ the function $\phi_x$ is
non-decreasing and right-continuous, because $\phi_x(\tau)\to 0$ as
$\tau\downarrow 0$ and $\phi_x(t+\tau)=\phi_{\pi_t x}(\tau)+\phi_x(t)$ for all
$t,\tau > 0$, $x\in E$. This and \eqref{eq:phi2} imply that $\Phi_x$ is the
distribution function of a positive finite random variable. Let
$\phi_x^{\leftarrow}$ be the \emph{generalized inverse} of
 $\phi_ x$, i.e.
\[
\phi_x^{\leftarrow}(q)=\inf\{t: \phi_x(t)\ge q\}, \quad q\ge 0,
\]
and let $\kappa\colon [0,1]\times E\to E$ be a measurable function such that
\begin{equation}\label{d:stkj}
\mathcal{J}(x,B)=l_1\{q\in[0,1]:\kappa(q,x)\in B\}\quad \text{for } x\in E,
B\in\B(E),
\end{equation}
where $l_1$ is the Lebesgue measure on $([0,1],\B([0,1]))$; the existence of this
function follows from \eqref{it:skJ} and the regularity of the space $E$
\cite[Lemma 3.22]{kallenberg02}. Observe that if $\vartheta$ is a random variable
uniformly distributed on $(0,1)$, then  $\kappa(\vartheta,x)$ has distribution
$\mathcal{J}(x,\cdot)$ and if $\varepsilon$ is
exponentially distributed with mean $1$, then
$\phi_x^{\leftarrow}(\varepsilon)$ has distribution $\Phi_x$ (note that
$\varepsilon=-\log(1-\vartheta)$).

Let $\varepsilon_n, \vartheta_n$, $n\in \mathbb{N}$, be a sequence of independent
random variables, where the $\varepsilon_n$ are exponentially distributed with
mean $1$ and the $\vartheta_n$ are uniformly distributed on $(0,1)$. Let $\Delta
t_0=\tau$, $\tau\in\mathbb{R}_+$, and let $\xi_0=x$, $x\in E$. Define recursively
the sequence of \emph{holding times} as
\begin{equation*}
\Delta t_n:=\phi_{\xi_{n-1}}^{\leftarrow}(\varepsilon_n),
\end{equation*}
and \emph{post-jump positions} as
\begin{equation*}
\xi_{n}:=\kappa(\vartheta_n,\pi_{\Delta t_n}(\xi_{n-1})).
\end{equation*}
Then $(\xi_n,\Delta t_n)$ is a discrete time-homogeneous Markov process on
$E\times\mathbb{R}_+$ with stochastic transition kernel given by
\[
\mathcal{G}((x,\tau),  B\times[0,t))=\int_{0}^t
\mathcal{J}(\pi_sx,B)\varphi(\pi_sx)e^{-\int_{0}^s\varphi(\pi_rx)dr}ds
\]
for $(x,\tau)\in E\times\mathbb{R}_+$, $t\in\mathbb{R}_+$, and
$B\in\B(E)$. Let $\mathbb{P}_{(x,\tau)}$ be the distribution of
$(\xi_n,\Delta t_n)$ starting at $(\xi_0,\Delta t_0)=(x,\tau)$. We
write in an abbreviated fashion $\mathbb{P}_x$ for
$\mathbb{P}_{(x,0)}$ and $\mathbb{E}_x$ for the integration with
respect to $\mathbb{P}_x$, $x\in E$.

Now let $\Delta t_0\equiv 0$ and define \emph{jump times} as
\begin{equation*}\label{eq:explt}
t_n:=\sum_{l=0}^n \Delta t_l\quad \text{for } n\ge 0.
\end{equation*}
Since $\Delta t_n>0$ for all $n\ge 1$ with probability one, the sequence $(t_n)$
is increasing and we can introduce the \emph{explosion time}
\begin{equation*}
t_{\infty}:=\lim_{n\to \infty}t_n.
\end{equation*}
The sample path of the process $\{X(t)\}_{t\ge 0}$ starting at $X(0)=\xi_0=x$ is
now defined by
\begin{equation*}\label{eq:xit}
X(t)=\left\{
  \begin{array}{ll}
  \pi_{t-t_{n}}(\xi_{n}), & \text{if }t_{n}\le t<t_{n+1}, n\ge 0,\\
\Delta, &\text{if } t\ge t_\infty,
\end{array}
\right.
\end{equation*}
where $\Delta\notin E$ is some extra state representing a cemetery point for
$\{X(t)\}_{t\ge 0}$. The process $\{X(t)\}_{t\ge 0}$ is called the \emph{minimal
PDMP} corresponding to the characteristics $(\pi,\varphi,\mathcal{J})$. It has
right continuous sample paths, by construction, and it is a strong Markov process,
by \cite[Theorem 8]{jacod96}. The process is called \emph{non-explosive}  if
$\mathbb{P}_x(t_\infty=\infty)=1$ for all $x\in E$.

In particular, if $\pi_tx=x$ for all $t\ge 0, x\in E$, then $\{X(t)\}_{t\ge 0}$ is
the so-called  \emph{pure jump Markov} process. Observe that in this case
condition \eqref{eq:phi2} is equivalent to $\varphi(x)>0$ for every $x\in E$ and
$\mathbb{P}_x(t_\infty=\infty)=1$ is equivalent to
\begin{equation*}
\sum_{n=1}^\infty \frac{\varepsilon_n}{\varphi(\xi_{n-1})}=\infty \quad
\mathbb{P}_x-\text{a.e.}
\end{equation*}
We also have $\mathbb{P}_x(t_\infty=\infty)=1$ if and only if the
series $\sum_{n=1}^\infty \frac{1}{\varphi(\xi_{n-1})}$ diverges $
\mathbb{P}_x-$a.e. (see e.g. \cite[Proposition
12.19]{kallenberg02}). General sufficient conditions for the
explosion of pure jump Markov processes are contained in
\cite[Section 2]{wagner05}. Note also that pure jump Markov
processes on a
countable set $E$ are continuous-time Markov chains. 

\subsection{Existence of densities for PDMP}\label{ssec:evol}

Let $\{X(t)\}_{t\ge 0}$ be the minimal PDMP on $E$ with characteristics
$(\pi,\varphi,\mathcal{J})$ as defined in Section~\ref{sec:PDPc} and let $m$ be a
$\sigma$-finite measure on $\E=\B(E)$. In this section we impose further
restrictions on the characteristics $(\pi,\varphi,\mathcal{J})$ which allow us to
define a substochastic semigroup $\{P(t)\}_{t\ge 0}$ on $\SX$ corresponding to the
Markov process $\{X(t)\}_{t\ge 0}$ and to provide a probabilistic characterization
of the analytic results from Section~\ref{ssec:MSG}.

We assume that a stochastic operator $P\colon \SX\to \SX$ is the transition
operator corresponding to $\mathcal{J}$ and that a substochastic semigroup
$\{S(t)\}_{t\ge 0}$ on $\SX$, with generator $(A,\mathcal{D}(A))$
satisfying~\eqref{eq:defA}, is such that
\begin{equation}\label{eq:T0St}
\int_E e^{-\int_{0}^{t}\varphi(\pi_rx)dr}1_B(\pi_tx)u(x)\,m(dx)=\int_B
S(t)u(x)\,m(dx)
\end{equation}%
for all $t\ge 0$, $u\in \SX_+$, $ B\in\B(E)$. As shown in Section~\ref{sec:exMs},
there exists a smallest substochastic semigroup $\{P(t)\}_{t\ge 0}$ on $\SX$ whose
generator is an extension of the operator $(\mathcal{C},\mathcal{D}(A))$ defined
in~\eqref{eq:defC}. The semigroup $\{P(t)\}_{t\ge 0}$ will be referred to as the
\emph{minimal semigroup on $\SX$ corresponding to} $(\pi,\varphi,\mathcal{J})$.

\begin{rem}
Observe that from \eqref{eq:T0St} it follows that for every $t>0$ the
transformation $\pi_t\colon E\to E$ is nonsingular, i.e. $m(\pi_t^{-1}(B))=0$ for
all $B\in \B(E)$ such that $m(B)=0$ \cite[Section 3.2]{almcmbk94}, and that there
is a stochastic operator $P_0(t)$ on $\SX$ satisfying
\[
\int_E 1_B(\pi_tx)v(x)\,m(dx)=\int_B P_0(t)v(x)\,m(dx),\quad B\in\B(E), v\in \SX.
\]
Hence, \[ S(t)u=P_0(t)v_t,\quad \text{where
}v_t(x)=e^{-\int_{0}^{t}\varphi(\pi_rx)dr}u(x).\] If $\{P_0(t)\}_{t\ge 0}$ is a
stochastic semigroup with generator $A_0$ then one may expect that the minimal
semigroup $\{P(t)\}_{t\ge 0}$ solves \eqref{eq:evd2} and that the operator $A$ is
as in~\eqref{eq:opAC} (see Section~\ref{sec:frag} for some examples).
\end{rem}

Substituting $B=E$ into \eqref{eq:T0St} leads to
\[
\|S(t)u\|=\int_{E}e^{-\int_0^t \varphi(\pi_rx)dr} u(x)\,m(dx)\quad\text{for all
 }u\in \SX_+,
\]
which shows that $\{S(t)\}_{t\ge 0}$ is strongly stable if and only if
condition~\eqref{eq:phi2} holds. Thus, the operator $K\colon \SX\to \SX$ defined
by
\begin{equation}\label{eq:K}
Ku=\lim_{\lambda\downarrow 0} P(\varphi R(\lambda,A)u)\quad \text{for } u\in\SX
\end{equation}
is stochastic, by Theorem~\ref{thm:opMarkov}.

 The main result of
 this section is the following (we use the convention $e^{-\infty}=0$).

\begin{thm}\label{thm:exist} Let $(t_n)$ be the
sequence of jump times and
 $t_\infty=\lim_{n\to\infty} t_n$ be the explosion time for $\{X(t)\}_{t\ge 0}$.
Then the following hold:
\begin{enumerate}
\item\label{thm:exist:i1}
For any $\lambda>0$
\begin{equation*}
\lim_{n\to\infty}(P(\varphi R(\lambda,A)))^{*n}1(x)=\mathbb{E}_x(e^{-\lambda
t_\infty})\quad m- a.e. \;x.
\end{equation*}
\item\label{thm:exist:i2} For  any $B\in\B(E)$,  $u\in\mathcal{D}(A)_+$, and $t>0$
\begin{equation*}
\int_{B}P(t)u(x)m(dx)=\int_E \mathbb{P}_{x}(X(t)\in B,t<t_\infty)u(x)m(dx).
\end{equation*}
\item\label{thm:exist:i3} The operator  $K$ as defined in \eqref{eq:K}
is the transition operator corresponding to the discrete-time Markov process
$(X(t_n))_{n\ge 0}$ with stochastic kernel
\begin{equation*}\label{eq:Kkernel}
\mathcal{K}(x,B) =\int_{0}^\infty
\mathcal{J}(\pi_sx,B)\varphi(\pi_sx)e^{-\int_{0}^s\varphi(\pi_rx)dr}ds, \quad x\in
E, B\in\B(E).
\end{equation*}
\end{enumerate}
\end{thm}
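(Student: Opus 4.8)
The plan is to prove the three assertions in turn, with part~\eqref{thm:exist:i1} supplying the key computation that feeds the other two. The common ingredient is an explicit integral representation for $(P(\varphi R(\lambda,A)))^{*n}$ acting on the constant function $1$, obtained by iterating the identity for a single application of the operator. First I would compute, for $u\in\SX_+$ and $\lambda>0$, that $R(\lambda,A)u(x)=\int_0^\infty e^{-\lambda t}S(t)u(x)\,dt$ and hence, using the representation~\eqref{eq:T0St} of $\{S(t)\}_{t\ge 0}$ and Fubini, identify the adjoint action: $(P(\varphi R(\lambda,A)))^*1(x)=\mathbb{E}_x(e^{-\lambda t_1})$, where $t_1=\Delta t_1=\phi_x^{\leftarrow}(\varepsilon_1)$ is the first jump time. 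Concretely, $(BR(\lambda,A))^*1(x)=\int_0^\infty \varphi(\pi_s x)e^{-\lambda s}e^{-\int_0^s\varphi(\pi_r x)dr}\,ds=\int_0^\infty e^{-\lambda t}\,\Phi_x(dt)$, the Laplace transform of the first-jump distribution. Then I would show by induction on $n$, using the Markov property of the chain $(\xi_n,\Delta t_n)$ and the tower property, that $(BR(\lambda,A))^{*n}1(x)=\mathbb{E}_x(e^{-\lambda t_n})$; the inductive step amounts to inserting the kernel $\mathcal{G}$ and recognizing the nested integral as conditioning on the first jump. Since $t_n\uparrow t_\infty$ and $e^{-\lambda t_n}\downarrow e^{-\lambda t_\infty}$ pointwise (with the convention $e^{-\infty}=0$), monotone (or dominated) convergence gives part~\eqref{thm:exist:i1}; note this limit is exactly $f_\lambda$ from~\eqref{def:fl} with $B=P(\varphi\,\cdot)$.

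For part~\eqref{thm:exist:i2}, I would use the Dyson--Phillips expansion~\eqref{eq:dpf1}--\eqref{eq:dpf2}: $P(t)u=\sum_{n\ge 0}S_n(t)u$, and argue that $S_n(t)$ is precisely the part of the evolution corresponding to exactly $n$ jumps before time $t$. The base case $S_0(t)=S(t)$ gives, by~\eqref{eq:T0St}, $\int_B S_0(t)u\,dm=\int_E \mathbb{P}_x(X(t)\in B,\ t_1>t)u\,dm$, matching the event of no jump. The inductive step unwinds the convolution $S_{n+1}(t)u=\int_0^t S_n(t-s)BS(s)u\,ds$: the factor $S(s)$ produces survival up to the first jump time $s$, the operator $B=P(\varphi\,\cdot)$ produces the jump according to $\mathcal{J}$ with intensity $\varphi$, and $S_n(t-s)$ handles the remaining $n$ jumps after restarting — this is exactly the recursive structure of the construction in Section~\ref{sec:PDPc}. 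Summing over $n$ yields $\int_B P(t)u\,dm=\int_E \mathbb{P}_x(X(t)\in B,\ t_n\le t<t_{n+1}\text{ for some }n)u\,dm=\int_E \mathbb{P}_x(X(t)\in B,\ t<t_\infty)u\,dm$, which is the claim.

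For part~\eqref{thm:exist:i3}, I would combine the formula for $K$ from~\eqref{eq:K} with the single-step computation already carried out. For $u\in\SX_+$ and $B\in\B(E)$, interchanging the limit $\lambda\downarrow 0$ with the integral (justified by monotone convergence since $\varphi R(\lambda,A)u$ increases as $\lambda\downarrow 0$, by the resolvent monotonicity recalled in Section~\ref{sec:pre}), one gets $Ku(x)=P(\varphi R(0^+,A)u)(x)$, and evaluating $\int_B Ku\,dm=\int_E \bigl(\int_0^\infty \mathcal{J}(\pi_s x,B)\varphi(\pi_s x)e^{-\int_0^s\varphi(\pi_r x)dr}\,ds\bigr)u(x)\,m(dx)$ using~\eqref{eq:T0St} and the definition of $P$ as the transition operator of $\mathcal{J}$; the inner integral is exactly $\mathcal{K}(x,B)$. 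Recognizing $\mathcal{K}(x,B)=\mathbb{P}_x(\xi_1\in B)$ from the kernel $\mathcal{G}$ with $\lambda=0$, and iterating, identifies $K$ as the transition operator of $(X(t_n))_{n\ge 0}=(\xi_n)_{n\ge 0}$.

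The main obstacle I expect is part~\eqref{thm:exist:i2}: making the correspondence between the $n$-th Dyson--Phillips term $S_n(t)$ and the $n$-jump part of the PDMP rigorous requires carefully tracking the adjoint/duality pairing under the pointwise extensions of $P$ and $S(t)$ described in Section~\ref{sec:exMs}, and verifying that the convolution integral in~\eqref{eq:dpf2} really does reproduce the law of $X(t)$ on the event $\{t_n\le t<t_{n+1}\}$ — in particular handling the strong Markov restart at each jump and the measurability of the relevant maps. The $\sigma$-finiteness of $m$ and the fact that $u\in\mathcal{D}(A)_+\subseteq\SXP$ (so $\varphi u\in\SX$) are used to keep all the integrals finite throughout the induction.
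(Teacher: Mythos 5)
Your proposal is correct and follows essentially the same route as the paper: identifying $(P(\varphi R(\lambda,A)))^{*}$ with the Laplace-transform operator $f\mapsto\int_0^\infty e^{-\lambda s}e^{-\int_0^s\varphi(\pi_rx)dr}\varphi(\pi_sx)\mathcal{J}f(\pi_sx)\,ds$ via \eqref{eq:T0St} and Fubini, iterating to get $\mathbb{E}_x(e^{-\lambda t_n})$ and passing to the limit; matching the Dyson--Phillips terms $S_n(t)$ with the $n$-jump parts of the process by induction on the renewal (convolution) identity; and letting $\lambda\downarrow 0$ in the one-step formula for part (3). The only difference is cosmetic: where you propose to unwind the convolution and the strong Markov restart by hand, the paper imports the renewal equation for $T_n(t)f(x)=\mathbb{E}_xf(X(t))1_{\{t<t_{n+1}\}}$ from Jacod--Skorokhod and then runs the same induction.
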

\begin{proof} Let $M(E)_+$ (respectively $BM(E)_+$) be the space of
all (bounded) Borel measurable nonnegative functions on $E$.  From \eqref{eq:T0St}
we obtain, by approximation,
\begin{equation}\label{eq:T0St1}
\int_E e^{-\int_{0}^{t}\varphi(\pi_rx)dr}f(\pi_tx)u(x)\,m(dx)=\int_E
f(x)S(t)u(x)\,m(dx)
\end{equation}%
for all $t\ge 0$, $u\in \SX_+$, $ f\in M(E)_+$.  Let $\lambda>0$ and
\begin{equation*}
G^\lambda f(x)=\int_{0}^\infty e^{-\lambda s}T_0(s)(\varphi \mathcal{J
}f)(x)\,ds\quad x\in E,\; f\in BM(E)_+,
\end{equation*}
where the operators $\mathcal{J }$ and $T_0(s)$ are defined by
\begin{equation*}\label{e:QJ}
\mathcal{J}f(x)=\int_E f(y)\mathcal{J}(x,dy),\quad x\in E, f\in BM(E)_+,
\end{equation*}
and
\begin{equation*}\label{e:T0}
T_0(s)f(x)=e^{-\int_{0}^{s}\varphi(\pi_rx)dr}f(\pi_s x), \quad x\in E, f\in
M(E)_+, s\ge 0.
\end{equation*}

From \eqref{eq:T0St1} and Fubini's theorem it follows that
\begin{equation}\label{e:GP}
\int_E G^\lambda f(x) u(x)m(dx)=\int_E f(x)P(\varphi R(\lambda,A)u)(x)m(dx)
\end{equation}
for $f\in BM(E)_+, u\in \SX_+$,  which gives $ (P(\varphi
R(\lambda,A)))^{*}f=G^\lambda f. $ On the other hand, the construction of the
sequence $(t_n,X(t_n))$ yields
\begin{equation}\label{eq:Glam}
(G^\lambda)^n f(x)=\mathbb{E}_x(f(X(t_n))e^{-\lambda t_n} ),\quad x\in E,
n\in\mathbb{N}, f\in BM(E)_+,
\end{equation}
which, by the monotone convergence theorem, leads to
\[
\lim_{n\to\infty}(G^\lambda)^n 1(x)=\mathbb{E}_x(e^{-\lambda t_\infty})
\]
and proves \eqref{thm:exist:i1}.

In order to show \eqref{thm:exist:i2}, for each $n\ge 0$ we define
\[
T_n(t) f(x)=\mathbb{E}_x f(X(t))1_{\{t<t_{n+1}\}},\quad  x\in E, t\in\mathbb{R}_+,
f\in BM(E)_+.
\]
Let $B\in \B(E)$ and $u\in \mathcal{D}(A)_+$. From the construction of the process
and the strong Markov property it follows that \cite[Theorem 9]{jacod96}
\begin{equation*}
T_n(t)1_B(x)=T_0(t)1_B(x)+ \int_0^{t} T_0(s)(\varphi
\mathcal{J}(T_{n-1}(t-s)1_B))(x)ds
\end{equation*}
for all $x\in E$, $t\ge 0$, and $n\ge1$. Hence, by induction, 
\[
\int_E T_n(t)1_B(x)u(x)m(dx)=\int_{B}\sum_{j=0}^n S_j(t)u(x)m(dx), \quad n\ge 0,
t>0,
\]
where the $S_j$ are defined in \eqref{eq:dpf2}. From \eqref{eq:dpf1} we obtain
\[
\lim_{n\to\infty} \int_{B}\sum_{j=0}^n S_j(t)u(x)m(dx)=\int_{B}P(t)u(x)m(dx).
\]
On the other hand,
\[
T_n(t)1_B(x)=\mathbb{P}_x(X(t)\in B,t<t_n)\uparrow \mathbb{P}_x(X(t)\in
B,t<t_\infty)
\]
for all $x\in E$, which proves~\eqref{thm:exist:i2}.

Finally, from \eqref{eq:Glam} we conclude that
\[\lim_{\lambda\downarrow 0} G^\lambda
1_B(x)=\mathbb{E}_x(1_B(X(t_1)))=\mathcal{K}(x,B),\] which completes the proof of
\eqref{thm:exist:i3}, by \eqref{e:GP}.
\end{proof}

As a direct consequence of Theorem~\ref{thm:exist} and Theorem~\ref{prop:pert} we
obtain the following corollary.

\begin{cor}\label{cor:Ms}
The semigroup $\{P(t)\}_{t\ge 0}$ is  stochastic if and only if
\[
m\{x\in E: \mathbb{P}_x(t_\infty<\infty)>0\}=0.
\]
In that case, if the distribution of $X(0)$ has a density $u_0\in \mathcal{D}(A)$
then $X(t)$ has the density $P(t)u_0$ for all $t>0$.
\end{cor}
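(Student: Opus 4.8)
The plan is to derive Corollary~\ref{cor:Ms} as a straightforward combination of the analytic characterization in Theorem~\ref{prop:pert} (together with Theorem~\ref{thm:sst}) and the probabilistic identities established in Theorem~\ref{thm:exist}. First I would recall from Theorem~\ref{prop:pert}, applied with the operator $B=P(\varphi\,\cdot\,)$, that $\{P(t)\}_{t\ge 0}$ is stochastic if and only if condition~\eqref{it:pert4} holds, namely $m\{x\in E:f_\lambda(x)>0\}=0$ for some (equivalently, every) $\lambda>0$, where $f_\lambda(x)=\lim_{n\to\infty}(P(\varphi R(\lambda,A)))^{*n}1(x)$. By part~\eqref{thm:exist:i1} of Theorem~\ref{thm:exist}, this limit is identified explicitly as $f_\lambda(x)=\mathbb{E}_x(e^{-\lambda t_\infty})$ for $m$-a.e.\ $x$. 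Hence $\{P(t)\}_{t\ge 0}$ is stochastic iff $\mathbb{E}_x(e^{-\lambda t_\infty})=0$ for $m$-a.e.\ $x$, and since $e^{-\lambda t_\infty}>0$ precisely on $\{t_\infty<\infty\}$ (using the convention $e^{-\infty}=0$), this is equivalent to $\mathbb{P}_x(t_\infty<\infty)=0$ for $m$-a.e.\ $x$, i.e.\ $m\{x\in E:\mathbb{P}_x(t_\infty<\infty)>0\}=0$. This settles the equivalence.

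For the final assertion about densities, I would invoke part~\eqref{thm:exist:i2} of Theorem~\ref{thm:exist}: for any $B\in\B(E)$, $u_0\in\mathcal{D}(A)_+$ and $t>0$,
\[
\int_B P(t)u_0(x)\,m(dx)=\int_E\mathbb{P}_x(X(t)\in B,\,t<t_\infty)\,u_0(x)\,m(dx).
\]
When the stochasticity criterion holds we have $\mathbb{P}_x(t<t_\infty)=1$ for $m$-a.e.\ $x$ (since $\{t<t_\infty\}\supseteq\{t_\infty=\infty\}$ and the latter has full $\mathbb{P}_x$-probability for $m$-a.e.\ $x$), so that the right-hand side reduces to $\int_E\mathbb{P}_x(X(t)\in B)\,u_0(x)\,m(dx)$. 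If $u_0\in\mathcal{D}(A)\cap D(m)$ is the density of $X(0)$, then $\int_E\mathbb{P}_x(X(t)\in B)\,u_0(x)\,m(dx)=\mathbb{P}(X(t)\in B)$ is exactly the distribution of $X(t)$; comparing with the left-hand side shows that $P(t)u_0$ is a version of the density of $X(t)$, for every $t>0$. (One also checks $P(t)u_0\in D(m)$: it is nonnegative and, by stochasticity, $\|P(t)u_0\|=\|u_0\|=1$.)

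I do not expect any serious obstacle here, since all the heavy lifting is done in the cited results; the corollary is essentially a translation. The one point requiring a little care is the passage from ``$f_\lambda=0$ $m$-a.e.\ for some $\lambda>0$'' to the statement for the exponential $\mathbb{E}_x(e^{-\lambda t_\infty})$ and then to $\mathbb{P}_x(t_\infty<\infty)$: one should note that $\lambda\mapsto\mathbb{E}_x(e^{-\lambda t_\infty})$ is nondecreasing as $\lambda\downarrow 0$ with limit $\mathbb{P}_x(t_\infty<\infty)$ by monotone convergence, and that Theorem~\ref{prop:pert} already guarantees the ``for some $\lambda$'' and ``for all $\lambda$'' versions coincide, so there is no ambiguity. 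A second minor point is justifying $\mathbb{P}_x(t<t_\infty)=1$ for $m$-a.e.\ $x$ and all $t>0$ from $\mathbb{P}_x(t_\infty=\infty)=1$ for $m$-a.e.\ $x$: this follows because a countable intersection over $t\in\mathbb{N}$ (or a single event $\{t_\infty=\infty\}$) still has full measure, and monotonicity in $t$ handles the rest.
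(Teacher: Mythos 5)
Your proof is correct and follows exactly the route the paper intends: the paper presents this corollary as a direct consequence of Theorem~\ref{thm:exist} and Theorem~\ref{prop:pert}, and your argument (identifying $f_\lambda(x)$ with $\mathbb{E}_x(e^{-\lambda t_\infty})$ via part~\eqref{thm:exist:i1}, then using part~\eqref{thm:exist:i2} for the density claim) is precisely that deduction. The two points of care you flag are handled correctly.
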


Furthermore, as a consequence of Theorem~\ref{thm:exist} and Theorem~\ref{thm:sst}
we obtain the following result.

\begin{cor}\label{cor:ss}
The semigroup $\{P(t)\}_{t\ge 0}$ is strongly stable if and only if
\[
m\{x\in E: \mathbb{P}_x(t_\infty=\infty)>0\}=0.
\]
\end{cor}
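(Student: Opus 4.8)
The plan is to combine the analytic characterization of strong stability in Theorem~\ref{thm:sst} with the probabilistic identification of $f_\lambda$ provided in part~\eqref{thm:exist:i1} of Theorem~\ref{thm:exist}. By definition, $f_\lambda(x)=\lim_{n\to\infty}(BR(\lambda,A))^{*n}1(x)$ where $Bu=P(\varphi u)$, so Theorem~\ref{thm:exist}~\eqref{thm:exist:i1} gives $f_\lambda(x)=\mathbb{E}_x(e^{-\lambda t_\infty})$ for $m$-a.e.\ $x$. Hence the condition appearing in Theorem~\ref{thm:sst}, namely $m\{x\in E:\liminf_{\lambda\downarrow 0}f_\lambda(x)<1\}=0$, becomes $m\{x\in E:\liminf_{\lambda\downarrow 0}\mathbb{E}_x(e^{-\lambda t_\infty})<1\}=0$.

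The remaining work is to identify, for each fixed $x$, the quantity $\liminf_{\lambda\downarrow 0}\mathbb{E}_x(e^{-\lambda t_\infty})$ in terms of $\mathbb{P}_x(t_\infty=\infty)$. Since $\lambda\mapsto e^{-\lambda t_\infty}$ is nondecreasing as $\lambda\downarrow 0$, the $\liminf$ is actually a genuine (monotone) limit, and by the monotone convergence theorem (or dominated convergence, since the integrand is bounded by $1$) we have
\[
\lim_{\lambda\downarrow 0}\mathbb{E}_x(e^{-\lambda t_\infty})=\mathbb{E}_x\Bigl(\lim_{\lambda\downarrow 0}e^{-\lambda t_\infty}\Bigr)=\mathbb{E}_x(1_{\{t_\infty=\infty\}}\cdot 0+1_{\{t_\infty<\infty\}}\cdot 1)=\mathbb{P}_x(t_\infty<\infty),
\]
using the convention $e^{-\infty}=0$. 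Therefore $f_\lambda(x)\to\mathbb{P}_x(t_\infty<\infty)$ as $\lambda\downarrow 0$, and the set $\{x:\liminf_{\lambda\downarrow 0}f_\lambda(x)<1\}$ equals $\{x:\mathbb{P}_x(t_\infty<\infty)<1\}=\{x:\mathbb{P}_x(t_\infty=\infty)>0\}$.

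Putting these together, Theorem~\ref{thm:sst} says $\{P(t)\}_{t\ge 0}$ is strongly stable if and only if $m\{x\in E:\mathbb{P}_x(t_\infty=\infty)>0\}=0$, which is exactly the assertion of the corollary. There is essentially no hard part here: the corollary is a direct translation, the only mild subtleties being the (routine) interchange of limit and expectation and the observation that the $\liminf$ over $\lambda\downarrow 0$ coincides with the monotone limit. One should just make sure the measurability of $x\mapsto\mathbb{E}_x(e^{-\lambda t_\infty})$ and of $x\mapsto\mathbb{P}_x(t_\infty=\infty)$ is in place, but this follows from the standard measurability of the PDMP construction already invoked in the proof of Theorem~\ref{thm:exist}, so the proof of the corollary can be stated in a couple of lines.
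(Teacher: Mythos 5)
Your proof is correct and follows exactly the route the paper intends: the corollary is stated there as a direct consequence of Theorem~\ref{thm:sst} and part~\eqref{thm:exist:i1} of Theorem~\ref{thm:exist}, and your identification of $\liminf_{\lambda\downarrow 0}f_\lambda(x)$ with $\mathbb{P}_x(t_\infty<\infty)$ via monotone convergence is precisely the omitted detail. The only point worth a passing remark is that the a.e.\ identity $f_\lambda=\mathbb{E}_\cdot(e^{-\lambda t_\infty})$ holds for each fixed $\lambda$ with a $\lambda$-dependent null set, but monotonicity of both families in $\lambda$ lets you reduce to a countable sequence $\lambda_n\downarrow 0$, so this is harmless.
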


\begin{rem}\label{rem:essest}
Note that for every density $u\in \mathcal{D}(A)_+$ we obtain
\begin{equation}\label{eq:essest}
\int_E P(t)u(x)m(dx)=\int_E \mathbb{P}_x(t_\infty>t)u(x)m(dx)\quad \text{for all }
t>0,
\end{equation}
by part~\eqref{thm:exist:i2} of Theorem~\ref{thm:exist}. In particular, if
$\mathcal{D}(A)$ is such that for every $u\in \SX_+$ we can find a non-decreasing
sequence $u_n\in \mathcal{D}(A)_+$ such that $u_n\uparrow u$ then
\eqref{eq:essest} holds for all $u\in \SX_+$.
\end{rem}

\begin{cor}
Let $E$ be a countable set, $m$ be the counting measure on $E$, $\varphi >0$, and
$\{X(t)\}_{t\ge 0}$ be a pure jump Markov process on $E$. Then the semigroup
$\{P(t)\}_{t\ge 0}$ is stochastic if and only if the process $\{X(t)\}_{t\ge 0}$
is non-explosive.
\end{cor}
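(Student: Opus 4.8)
The plan is to specialize the characterization from Corollary~\ref{cor:Ms}, namely that $\{P(t)\}_{t\ge 0}$ is stochastic if and only if $m\{x\in E:\mathbb{P}_x(t_\infty<\infty)>0\}=0$, to the case at hand. Here $E$ is countable and $m$ is the counting measure, so every singleton has positive measure; consequently the condition $m\{x:\mathbb{P}_x(t_\infty<\infty)>0\}=0$ is equivalent to demanding that $\mathbb{P}_x(t_\infty<\infty)=0$ for \emph{every} $x\in E$, i.e.\ $\mathbb{P}_x(t_\infty=\infty)=1$ for all $x$. By the definition given in Section~\ref{sec:PDPc}, this last statement is exactly what it means for $\{X(t)\}_{t\ge 0}$ to be non-explosive.

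What remains is to check that the setup of Section~\ref{ssec:evol} is genuinely available in this discrete setting, so that Corollary~\ref{cor:Ms} applies. First I would note that for a pure jump Markov process we have $\pi_t x=x$, and the natural choice is $A_0=0$, so that $Au=-\varphi u$ on $\SXP=\{u:\sum_x\varphi(x)|u(x)|<\infty\}$; since $\varphi>0$, condition~\eqref{eq:phi2} holds (as remarked after its statement, for the pure jump case \eqref{eq:phi2} is equivalent to $\varphi>0$), and $(A,\SXP)$ generates the substochastic semigroup $S(t)u(x)=e^{-\varphi(x)t}u(x)$, which satisfies~\eqref{eq:defA} and~\eqref{eq:T0St} with $\pi_t x=x$. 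Take $P$ to be the transition operator corresponding to $\mathcal{J}$, which on a countable space is given by a stochastic matrix. Then the hypotheses of Section~\ref{ssec:evol} are met, the minimal semigroup $\{P(t)\}_{t\ge 0}$ corresponding to $(\pi,\varphi,\mathcal{J})$ is defined, and Corollary~\ref{cor:Ms} is in force.

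Finally I would combine the two observations: $\{P(t)\}_{t\ge 0}$ stochastic $\iff m\{x:\mathbb{P}_x(t_\infty<\infty)>0\}=0 \iff \mathbb{P}_x(t_\infty<\infty)=0$ for all $x\in E$ (using that $m$ is counting measure) $\iff \mathbb{P}_x(t_\infty=\infty)=1$ for all $x$ $\iff \{X(t)\}_{t\ge 0}$ is non-explosive. This is essentially a one-line deduction once the dictionary is in place.

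I do not anticipate a serious obstacle here; the only thing requiring a moment's care is the passage from "$m$-null set" to "every point", which hinges on each point having positive counting measure — a point worth stating explicitly but not hard. The bulk of the content is really bookkeeping to confirm that the abstract framework of Section~\ref{ssec:evol} degenerates correctly to continuous-time Markov chains, and this was already flagged in the text following the construction in Section~\ref{sec:PDPc}.
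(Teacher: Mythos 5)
Your proposal is correct and follows exactly the route the paper intends: the corollary is an immediate specialization of Corollary~\ref{cor:Ms}, using that under counting measure the $m$-null condition becomes a pointwise condition, which is precisely the definition of non-explosiveness. Your verification that the abstract setup (with $\pi_t x=x$, $Au=-\varphi u$, $S(t)u=e^{-\varphi t}u$) applies in the discrete case matches what the paper itself notes in Sections~\ref{sec:PDPc} and~\ref{sec:frag}.
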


\section{Fragmentation models revisited}\label{sec:frag}
In this section we illustrate the applicability of our results to fragmentation
models described by linear rate equations \cite{mcgradyziff87,edwards90, lamb97,
banasiaklamb03,arinorudnicki04,banasiak04}. For a recent survey of analytic
methods for such models we refer the reader to~\cite{banasiak06}. See also
\cite{bertoin06} for a different probabilistic treatment of so-called random
fragmentation processes.

Let $E=(0,\infty)$, $\E=\B(E)$, and $m(dx)=xdx$. Let $b\colon E\times
E\to\mathbb{R}_+$ be a Borel measurable function such that for every $y>0$
\begin{equation}\label{eq:mass}
\int_0^yb(x,y)xdx=y \quad\text{and}\quad b(x,y)=0\quad \text{for} \quad x\ge y.
\end{equation}
The stochastic kernel defined by
\begin{equation}\label{eq:Jfr}
\mathcal{J}(x,B)=\frac{1}{x}\int_{0}^x 1_B(y) b(y,x)ydy \quad \text{for } x\in E,
B\in \B(E),
\end{equation}
will be referred to as the \emph{fragmentation kernel}.  According to
\eqref{d:stkj}, we have $\mathcal{J}(x,B)=l_1\{q\in[0,1]:\kappa(q,x)\in B\}$,
where
\begin{equation*}
\kappa(q,x)=H_x^{\leftarrow}(q)x\quad \text{for}\quad q\in[0,1], x>0,
\end{equation*}
and $H_x^{\leftarrow}(q)=\inf\{r\in[0,1]: H_x(r)\ge q\}$, $q\in [0,1]$, is the
generalized inverse of the distribution function
\begin{equation*}
H_x(r)=\int_{0}^r b(xz,x)xzdz \quad \text{for } r\in[0,1].
\end{equation*}
Note that  $0<H_x^{\leftarrow}(q)\le 1$ for all $x$ and $q\in(0,1)$.

The kernel $\mathcal{J}$ is called \emph{homogenous} if $b$ is of the form
\begin{equation}\label{eq:hk}
b(x,y)=\frac{1}{y}h\left(\frac{x}{y}\right) \quad \text{for}\quad 0<x<y,
\end{equation}
where $h\colon (0,1)\to\mathbb{R}_+$ is a Borel measurable function with $
\int_{0}^1h(z)zdz=1. $ Since $H_x(r)$ does not depend on $x$, we obtain
\begin{equation*}
\kappa(q,x)=H^{\leftarrow}(q)x, \quad \text{where}\quad H(r)=\int_{0}^r h(z)zdz.
\end{equation*}

The kernel $\mathcal{J}$ is called \emph{separable} if $b$ is of the form
\begin{equation*}\label{eq:hs}
b(x,y)=\frac{\beta(x)y}{\Lambda(y)}\quad \text{for}\quad x<y, \quad
\text{where}\quad \Lambda(y)=\int_{0}^{y}\beta(z)zdz
\end{equation*}
and $\beta$ is a nonnegative Borel measurable function on $E$ such that
$\Lambda(y)$ is finite and positive for all $y>0$. We have
$H_x(r)=\Lambda(xr)/\Lambda(x)$ for $r\in[0,1]$. Hence
$H_x^{\leftarrow}(q)=\Lambda^{\leftarrow}(q\Lambda(x))/x$ and in this case
\[
\kappa(q,x)=\Lambda^{\leftarrow}(q\Lambda(x)).
\]
Since $\Lambda(\Lambda^\leftarrow x)=x$ for all $x>0$, the mapping $x\mapsto
\Lambda(x)$ transforms this case into the homogenous fragmentation with $H(r)=r$
for $r\in(0,1)$.

In what follows we assume that $\varepsilon_n,\vartheta_n$, $n\in\mathbb{N}$, and
$\xi_0$ are independent random variables, where the $\varepsilon_n$ are
exponentially distributed with mean $1$, the $\vartheta_n$ are uniformly
distributed on $(0,1)$, and $\xi_0$ is an $E-$valued random variable.

\subsection{Pure fragmentation}
In this section we consider the pure fragmentation equation
\cite{mcgradyziff87,lamb97,melzak57}
\begin{equation}\label{eq:fr1}
\dfrac {\partial u(t,x)}{\partial t}=\int_{x}^\infty
b(x,y)\varphi(y)u(t,y)dy-\varphi(x)u(t,x),\; t>0, x>0,
\end{equation}
where $b$ satisfies \eqref{eq:mass} and $\varphi$ is a positive Borel measurable
function. If we let $\psi(y,x)=b(x,y)\varphi(y)$ then \eqref{eq:fr1} has the same
form as in \cite{melzak57} in the absence of coagulation. For a discussion of the
model we refer the reader to \cite[Chapter 8]{banasiakarlotti06}.

We rewrite equation \eqref{eq:fr1} in the form \eqref{eq:evd1} with the stochastic
operator $P$ on $\SX$ given by
\begin{equation}\label{eq:MoPf}
Pu(x)=\int_x^\infty b(x,y)u(y)dy, \quad  u\in \SX,
\end{equation}
and \[ Au=-\varphi u,\quad u\in\SXP=\{u\in
\SX:\int_{0}^\infty\varphi(x)|u(x)|xdx<\infty\}.
\]
Observe that $P$ is the transition operator corresponding to $\mathcal{J}$ as
defined in~\eqref{eq:Jfr}. The operator $(A,\SXP)$ generates a substochastic
semigroup $\{S(t)\}_{t\ge 0}$ on $\SX$ where $S(t)u(x)=e^{-\varphi(x)t}u(x)$,
$t\ge 0,x\in E$, $u\in\SX$. Hence, \eqref{eq:T0St} holds with $\pi_tx=x$, $t\ge
0,x\in E$.

Let $\{X(t)\}_{t\ge 0}$ be the minimal pure jump Markov process with
characteristics $(\pi,\varphi,\mathcal{J})$ and let $\{P(t)\}_{t\ge 0}$ be the
minimal semigroup on $\SX$ corresponding to $(\pi,\varphi,\mathcal{J})$ as defined
in Section~\ref{ssec:evol}. The sequences of jump times $t_n$ and post-jump
positions $\xi_n=X(t_n)$ satisfy
\begin{equation*}\label{eq:xinjm}
t_n=\sum_{k=1}^n\frac{\varepsilon_k}{\varphi(\xi_{k-1})},\quad
\xi_n=H_{\xi_{n-1}}^{\leftarrow}(\vartheta_n) \xi_{n-1}, \quad n\ge 1.
\end{equation*}
Since the sequence $(\xi_n)$ is non-increasing, we can take $\Delta=0$ and write
for the explosion time
\[
t_\infty=\inf\{t>0: X(t)=0\}.
\]
As a consequence of Corollary~\ref{cor:Ms} we obtain the following result of
\cite{lamb97}.
\begin{cor}\label{cor:Mpj}
If $\varphi$ is bounded on bounded subsets of $(0,\infty)$ then $\{P(t)\}_{t\ge
0}$ is stochastic.
\end{cor}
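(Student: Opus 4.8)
The plan is to deduce this from Corollary~\ref{cor:Ms}, which says that $\{P(t)\}_{t\ge 0}$ is stochastic precisely when $m\{x\in E:\mathbb{P}_x(t_\infty<\infty)>0\}=0$. Since $m(dx)=x\,dx$ is equivalent to Lebesgue measure on $(0,\infty)$, it suffices to show that $\mathbb{P}_x(t_\infty<\infty)=0$ for \emph{every} $x>0$, that is, that the minimal pure jump Markov process started at $x$ is non-explosive.

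First I would record the monotonicity of the embedded chain: because $0<H_y^{\leftarrow}(q)\le 1$ for all $y>0$ and $q\in(0,1)$, the recursion $\xi_n=H_{\xi_{n-1}}^{\leftarrow}(\vartheta_n)\xi_{n-1}$ gives $\xi_n\le\xi_{n-1}$, so under $\mathbb{P}_x$ one has $\xi_n\le\xi_0=x$ for all $n\ge 0$. Next, fix $x>0$ and set $M_x:=\sup_{0<y\le x}\varphi(y)$, which is finite by the hypothesis that $\varphi$ is bounded on bounded subsets of $(0,\infty)$ and positive since $\varphi>0$. Then $\varphi(\xi_{n-1})\le M_x$ for every $n\ge 1$, hence
\[
\sum_{n=1}^{\infty}\frac{1}{\varphi(\xi_{n-1})}\ge\sum_{n=1}^{\infty}\frac{1}{M_x}=\infty\qquad\mathbb{P}_x\text{-a.e.}
\]
By the explosion criterion for pure jump Markov processes recalled in Section~\ref{sec:PDPc} (divergence of $\sum_n\varphi(\xi_{n-1})^{-1}$ is equivalent to $\mathbb{P}_x(t_\infty=\infty)=1$), this yields $\mathbb{P}_x(t_\infty<\infty)=0$. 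Equivalently, one may argue directly from $t_n=\sum_{k=1}^n\varepsilon_k/\varphi(\xi_{k-1})\ge M_x^{-1}\sum_{k=1}^n\varepsilon_k$ together with the strong law of large numbers applied to the i.i.d.\ mean-one variables $\varepsilon_k$. Since this holds for every $x>0$, Corollary~\ref{cor:Ms} gives that $\{P(t)\}_{t\ge 0}$ is stochastic, and in addition identifies $P(t)u_0$ as the density of $X(t)$ whenever $X(0)$ has a density $u_0\in\mathcal{D}(A)$.

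I do not expect a genuine obstacle: the substantive content has already been packaged into Theorem~\ref{thm:exist} and Corollary~\ref{cor:Ms}, and what remains is the elementary observation that the fragmentation chain decreases, so $\varphi$ along the trajectory stays bounded. The one point deserving a moment's attention is the reading of the hypothesis: ``bounded on bounded subsets of $(0,\infty)$'' must be understood to include sets of the form $(0,R]$ (which are bounded subsets of $(0,\infty)$), and this is exactly what allows $\varphi$ to be bounded uniformly along the decreasing trajectory $(\xi_{n-1})_{n\ge1}$ issued from $x$; mere local boundedness on \emph{compact} subsets of $(0,\infty)$ would not rule out blow-up near $0$, which is the regime in which fragmentation processes can actually explode.
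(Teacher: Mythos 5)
Your proof is correct and follows essentially the same route as the paper: the embedded chain $(\xi_n)$ is non-increasing, so $\varphi$ is uniformly bounded along it by $M_x=\sup_{0<y\le x}\varphi(y)$, whence $t_n\ge M_x^{-1}\sum_{k\le n}\varepsilon_k\to\infty$ and Corollary~\ref{cor:Ms} applies. Your closing remark about reading the hypothesis on sets of the form $(0,R]$ rather than compacta is exactly the point the paper's bound $\varphi(x)\le M_N$ for all $x\le N$ relies on.
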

\begin{proof}
Let $N>0$ and let $M_N<\infty$ be such that $\varphi(x)\le M_N$ for all $x\le N$.
Thus, if $\xi_0\le N$ then $\xi_k\le N$ for all $k$, and $t_n\ge \sum_{k=1}^n
\varepsilon_k/M_N$ for all $n$. As a result $\mathbb{P}_x(t_\infty<\infty)=0$ for
all $x\le N$, and the claim follows from Corollary~\ref{cor:Ms}.
\end{proof}

From Corollary~\ref{cor:ss} we obtain the following result.
\begin{cor}\label{cor:sspj}
Let $V$ be a nonnegative Borel measurable function such that $V(x)\varphi(x)\ge 1$
for all $x>0$. If
\[
m\{x\in E: \mathbb{P}_x\Bigl(\sum_{n=1}^\infty \varepsilon_n
V(\xi_n)=\infty\Bigr)>0\}=0
\] then $\{P(t)\}_{t\ge 0}$ is strongly
stable.
\end{cor}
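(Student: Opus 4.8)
The plan is to reduce everything to Corollary~\ref{cor:ss}: it suffices to show that $m\{x\in E:\mathbb{P}_x(t_\infty=\infty)>0\}=0$, i.e.\ that $\mathbb{P}_x(t_\infty<\infty)=1$ for $m$-almost every $x\in E$. Recall from the description of pure fragmentation that, under $\mathbb{P}_x$,
\[
t_\infty=\sum_{k=1}^\infty\frac{\varepsilon_k}{\varphi(\xi_{k-1})},\qquad \xi_n=H_{\xi_{n-1}}^{\leftarrow}(\vartheta_n)\,\xi_{n-1},
\]
so that each $\xi_n$ is a measurable function of $\xi_0=x$ and $\vartheta_1,\dots,\vartheta_n$ only. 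In particular the sequence $(\xi_n)_{n\ge 0}$ is independent of $(\varepsilon_n)_{n\ge 1}$, and $\xi_n>0$ $\mathbb{P}_x$-a.s.\ for every $n$, since $\vartheta_n\in(0,1)$ and $0<H_x^{\leftarrow}(q)\le 1$ for $q\in(0,1)$.

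First I would split off the first summand of $t_\infty$ and use $V(\xi_n)\varphi(\xi_n)\ge 1$ together with $\varphi>0$ and $\xi_n>0$ to estimate, under $\mathbb{P}_x$,
\[
t_\infty=\frac{\varepsilon_1}{\varphi(x)}+\sum_{n=1}^\infty\frac{\varepsilon_{n+1}}{\varphi(\xi_n)}\le\frac{\varepsilon_1}{\varphi(x)}+\sum_{n=1}^\infty\varepsilon_{n+1}V(\xi_n).
\]
The term $\varepsilon_1/\varphi(x)$ is $\mathbb{P}_x$-a.s.\ finite because $\varphi(x)>0$, so the problem is reduced to showing that $\sum_{n\ge1}\varepsilon_{n+1}V(\xi_n)<\infty$ $\mathbb{P}_x$-a.s.\ for $m$-almost every $x$.

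The key step is the observation that, under $\mathbb{P}_x$, the random sequences $(\varepsilon_{n+1},\xi_n)_{n\ge1}$ and $(\varepsilon_n,\xi_n)_{n\ge1}$ have the same distribution: the $(\xi_n)_{n\ge1}$ component is the same in both, while $(\varepsilon_{n+1})_{n\ge1}$ and $(\varepsilon_n)_{n\ge1}$ are each sequences of independent mean-one exponential random variables, and both are independent of $(\xi_n)_{n\ge1}$ by the first paragraph. Hence $\sum_{n\ge1}\varepsilon_{n+1}V(\xi_n)$ and $\sum_{n\ge1}\varepsilon_n V(\xi_n)$ are equal in law, so
\[
\mathbb{P}_x\Bigl(\sum_{n=1}^\infty\varepsilon_{n+1}V(\xi_n)=\infty\Bigr)=\mathbb{P}_x\Bigl(\sum_{n=1}^\infty\varepsilon_nV(\xi_n)=\infty\Bigr),
\]
and the right-hand side vanishes for $m$-almost every $x$ by hypothesis. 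Together with the displayed estimate this gives $\mathbb{P}_x(t_\infty<\infty)=1$ for $m$-almost every $x$, and Corollary~\ref{cor:ss} then yields strong stability of $\{P(t)\}_{t\ge0}$.

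I expect the only delicate point to be the mismatch between the index $\xi_{n-1}$ appearing in $t_\infty$ and the index $\xi_n$ appearing in the hypothesis; this is precisely what is resolved by the independence of the post-jump positions from the holding-time randomness — a feature special to the pure jump (no-flow) case — while the rest of the argument is routine.
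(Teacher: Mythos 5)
Your proof is correct and follows exactly the route the paper intends: the paper states this corollary as a direct consequence of Corollary~\ref{cor:ss} without supplying details, and your argument fills them in by bounding $t_\infty$ via $1/\varphi(\xi_n)\le V(\xi_n)$ and invoking $\mathbb{P}_x(t_\infty=\infty)=0$ a.e. Your handling of the index shift — using that in the pure jump case $(\xi_n)$ is a function of $x$ and the $\vartheta_n$ alone, hence independent of $(\varepsilon_n)$, so that $\sum_n\varepsilon_{n+1}V(\xi_n)$ and $\sum_n\varepsilon_nV(\xi_n)$ agree in law — is a legitimate and necessary detail that the paper leaves implicit.
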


\begin{exmp} Consider a homogenous kernel as in~\eqref{eq:hk} and let $V(x)=x^\gamma/a$, where $\gamma,a>0$.
The random variable
\[
\tau=\sum_{k=1}^\infty \varepsilon_k
\prod_{l=1}^{k-1}H^{\leftarrow}(\vartheta_l)^\gamma
\]
 is finite with probability $1$, by \cite[Theorem 1.6]{vervaat79}. Thus,
if $\varphi(x)\ge  a /x^\gamma$ for $x>0$, then $\{P(t)\}_{t\ge 0}$ is strongly
stable, by Corollary~\ref{cor:sspj}. Since $t_\infty\le V(\xi_0)\tau$, we have for
every $u\in\SX_+$, by Remark~\ref{rem:essest},
\begin{equation}\label{eq:ssest}
\int_{0}^\infty P(t)u(x)xdx\le \int_{0}^\infty (1-F_\tau(atx^{-\gamma}))u(x)xdx
\quad \text{for all } t>0,
\end{equation}
with equality when $\varphi(x)=  a /x^\gamma$,   where $F_\tau$ is the
distribution function of $\tau$.

In particular, if $h(z)=(\nu+2)z^\nu$ with $\nu+2>0$, then
$H^{\leftarrow}(\vartheta_1)=\vartheta_1^{1/(\nu+2)}$ and $\tau$ has the gamma
distribution \cite[Example 3.8]{vervaat79} such that 
\[
1-F_\tau(q)=\frac{1}{\Gamma(1+(\nu+2)/\gamma)}\int_{q}^\infty
s^{(\nu+2)/\gamma}e^{-s}ds, \quad q\ge 0,
\]
where $\Gamma$ is the Gamma function.
When 
$\varphi(x)=  1 /x^\gamma$ the equality in \eqref{eq:ssest} coincides with the
heuristic results of \cite{mcgradyziff87}; for values of $\nu$ and $\gamma$ such
that $(\nu+2)/\gamma\in\mathbb{N}\cup \{0\}$ we obtain
\[
\int_{0}^\infty P(t)u(x)xdx=\int_{0}^\infty
e^{-tx^{-\gamma}}\sum_{k=0}^{(\nu+2)/\gamma} \frac{(tx^{-\gamma})^k}{k!}u(x)xdx
\]
for all $t>0$ and $u\in \SX_+$. See \cite[Example 6.5]{banasiak06} for quite
involved calculations for the specific choice of $\nu=0$ and $\gamma=1$.
\end{exmp}

\begin{rem}
Since the sequence $(\xi_n)$ is non-increasing, it converges with probability one
to some random variable. In particular, when the kernel is either homogenous or
separable the limiting random variable is zero. Then it is sufficient  to look
only at a neighborhood of zero to decide whether the semigroup is stochastic or
not.
\end{rem}

\subsection{Fragmentation with growth}
Pure fragmentation, described by \eqref{eq:fr1}, may occur together with other
phenomena. In this section we study fragmentation processes with continuous
growth, where the growth process is described by a semidynamical system $\pi$
satisfying the equation
\begin{equation}\label{e:grpr}
\frac{\partial}{\partial t}\pi_tx=g(\pi_tx) \quad \text{for }x,t>0,
\end{equation}
where $g$ is a strictly positive continuous function. We refer the reader to
\cite{arinorudnicki04,banasiak04,mackeytyran08} for related examples. We denote by
$\Lloc$  the space of all Borel measurable functions on $E$ which are integrable
on compact subsets of $E$ and by $\ACloc$ the space of absolutely continuous
functions on $E$.

Our first task is to construct the minimal PDMP $\{X(t)\}_{t\ge0}$ on $E$ with
characteristics $(\pi,\varphi,\mathcal{J})$, where $\pi$ satisfies~\eqref{e:grpr},
$\varphi\in \Lloc$ is nonnegative, and $\mathcal{J}$ is the fragmentation kernel
\eqref{eq:Jfr}. We assume throughout this section that there is $\bar{x}>0$ such
that
\begin{equation}\label{eq:asGQ}
\int_{\bar{x}}^\infty \frac{1}{g(z)}dz=\infty\quad \text{and }\quad
\int_{\bar{x}}^\infty\frac{\varphi(z)}{g(z)}dz=\infty.
\end{equation}
Since $1/g\in \Lloc$ and $\varphi/g\in \Lloc$, we can define
\begin{equation}\label{eq:GQ}
G(x)=\int_{x_0}^x \frac{1}{g(z)}dz\quad \text{and} \quad
Q(x)=\int_{x_1}^{x}\frac{\varphi(z)}{g(z)}dz,
\end{equation}
where $x_0=0$ and $x_1=0$ when the integrals exist for all $x$ and, otherwise,
$x_0$, $x_1$ are any points in $E$.

The function $G$ is increasing, invertible, continuously differentiable on $E$,
and the formula $r(t,x)=G^{-1}(G(x)+t)$ defines a  monotone continuous function in
each variable. Since $G(\infty)=+\infty$, the function $r(t,x)$ is well defined
for all $t\ge 0$, $x\in E$ and determines a semidynamical system on $E$
\begin{equation}\label{d:rtx}
\pi_t x=G^{-1}(G(x)+t).
\end{equation}
In the case when $G(0)=-\infty$ the function $r(t,x)$ is well defined for all
$t\in\mathbb{R}$ and $x\in E$, so that we have, in fact, a flow $\pi_t$ on $E$
such that $\pi_t(E)=E$. In any case, for any given $x>0$ we have
$\pi_{-t}x=r(-t,x)\in E$ for all $t>0$ such that $t<G(x)-G(0)$.

The function $Q$ is non-decreasing. Let $Q^{\leftarrow}$ be the generalized
inverse of $Q$, which is defined and finite for all $q\in\mathbb{R}$,
by~\eqref{eq:asGQ}. We have
\[
\phi_x(t)=\int_{0}^{t}\varphi(\pi_rx)dr=\int_{x}^{\pi_tx}\frac{\varphi(z)}{g(z)}dz=Q(\pi_tx)-Q(x)
\quad \text{for } x>0, t\ge 0,
\]
so that \eqref{eq:phi2} holds if and only if $Q(\infty)=\infty$, which is our
assumption \eqref{eq:asGQ}. From \eqref{d:rtx} it follows that
\begin{equation}\label{eq:inphi}
\phi_x^{\leftarrow}(q)=G(Q^{\leftarrow}(Q(x)+q))-G(x)
\end{equation}
and
\begin{equation}\label{eq:inpi}
\pi_{\phi_x^{\leftarrow}(q)}x=Q^{\leftarrow}(Q(x)+q)\quad \text{for } x>0,q\ge 0.
\end{equation}
Consequently, the random variables  $t_n$ and $\xi_n=X(t_n)$, $n\ge 1$, now
satisfy
\begin{equation}\label{eq:xinjmg}
t_n=\sum_{k=1}^n\phi_{\xi_{k-1}}^{\leftarrow}(\varepsilon_k),\quad
\xi_n=H_{Q^{\leftarrow}(Q(\xi_{n-1})+\varepsilon_n)}^{\leftarrow}(\vartheta_n)
Q^{\leftarrow}(Q(\xi_{n-1})+\varepsilon_n).
\end{equation}

\begin{rem}\label{rem:Mi0}
If $\varphi$ is bounded above by a constant $a$ then
\[
\phi_x^{\leftarrow}(q)=G(Q^{\leftarrow}(Q(x)+q))-G(x)\ge
\frac{1}{a}(Q(Q^{\leftarrow}(Q(x)+q))-Q(x))\ge \frac{q}{a}.
\]
Thus $t_n\ge \frac{1}{a}\sum_{k=1}^n\varepsilon_k$ for every $n$, so that
$\mathbb{P}_x(t_\infty<\infty)=0$ for all $x>0$.
\end{rem}
\begin{rem}\label{rem:Mi}
Observe that if \[ m\{x\in E:\mathbb{P}_x(\limsup_{n\to\infty}\xi_n<\infty)>0\}=0,
\]
then $m\{x\in E:\mathbb{P}_x(t_\infty<\infty)>0\}=0$. This is a consequence of
$G(\infty)=\infty$ and $ t_n\ge
G(Q^{\leftarrow}(Q(\xi_{n-1})+\varepsilon_n))-G(\xi_0)$ for $n\ge 1$.
\end{rem}

We now turn our attention to the minimal semigroup $\{P(t)\}_{t\ge 0}$ on $\SX$
corresponding to $(\pi,\varphi,\mathcal{J})$. For $t>0$ we define the operators
$S(t)$ on $\SX$ by
\begin{equation}\label{eq:dst}
S(t)u(x)=\mathbf{1}_E(\pi_{-t}x)u(\pi_{-t}x)\frac{\pi_{-t}xg(\pi_{-t}x)}{xg(x)}e^{Q(\pi_{-t}x)-Q(x)},\quad
x\in E,
\end{equation}
for  $u\in \SX.$ Then $\{S(t)\}_{t\ge 0}$ is a substochastic semigroup on $\SX$
satisfying \eqref{eq:T0St}, whose generator is of the form
\begin{equation*}\label{eq:genA}
Au(x)=-\frac{1}{x}\frac{d}{dx}\bigl(xg(x)u(x)\bigr)-\varphi(x)u(x),\quad
u\in\mathcal{D}(A)=\mathcal{D}_0\cap \SXP,
\end{equation*}
where $u\in \mathcal{D}_0$  if and only if the function $\tilde{u}(x)=xg(x)u(x)$
is such that $\tilde{u}\in AC$, $\tilde{u}'(x)/x $ belongs to $\SX$, and,
additionally  $\lim_{x\to 0}\tilde{u}(x)=0$ when $G(0)=0$. This can be derived
from \cite[Theorem 5]{mackeytyran08} by an isomorphic transformation of the space
$\SX$. The resolvent operator $R(1,A)$ is given by
\[
R(1,A)u(x)=\frac{1}{xg(x)}e^{-G(x)-Q(x)}\int_{0}^x e^{G(y)+Q(y)} u(y)ydy,\quad
u\in\SX,
\]
and its extension $R_1$, as described in Section~\ref{sec:exMs},
is defined by the same integral expression. It can be proved as in
\cite[Lemma 4.1]{arlottibanasiak04} that the extension
$(\mathcal{A},\mathcal{D}(\mathcal{A}))$ defined in \eqref{def:A}
is of the form
\[
\mathcal{A}u(x)=-\frac{1}{x}\frac{d}{dx}\bigl(xg(x)u(x)\bigr)-\varphi(x)u(x)
\]
for $u\in \mathcal{D}(\mathcal{A})\subseteq\{u\in\SX: \tilde{g}u\in \ACloc\}$,
where $\tilde{g}(x)=xg(x)$, $x>0$. Consequently, the corresponding evolution
equation on $\SX$ for $u(t,x)=P(t)u_0(x)$ is of the form
\begin{equation*}
\dfrac {\partial u(t,x)}{\partial t}=-\frac{1}{x}\frac{\partial}{\partial
x}(xg(x)u(t,x))-\varphi(x)u(t,x)+\int_{x}^\infty b(x,y)\varphi(y)u(t,y)dy
\end{equation*}
with $u(0,x)=u_0(x)$.

Finally, we apply our results from Sections~\ref{ssec:MSG} and~\ref{ssec:evol} to
the minimal semigroup $\{P(t)\}_{t\ge 0}$. It is easily seen, by
Theorem~\ref{thm:exist}, that the operator $K$ as defined in \eqref{eq:K} is a
stochastic operator with  kernel
\begin{equation}\label{e:k1}
k(x,y)=\int_{\max\{x,y\}}^\infty b(x,z)\frac{\varphi(z)}{z g(z)}e^{Q(y)-Q(z)}dz,
\quad x,y\in (0,\infty).
\end{equation}
By Theorem~\ref{thm:opMarkov}, $\{P(t)\}_{t\ge 0}$ is stochastic, if $K$ is mean
ergodic. In particular, $K$ is mean ergodic, if $K$ is \emph{asymptotically
stable}, i.e. there is $u_*\in D(m)$ such that $Ku_*=u_*$ and
\[
\lim_{n\to\infty}\|K^n u-u_*\|=0\quad \text{for all }u\in D(m).
\]
General sufficient conditions for the latter to hold are contained in
\cite[Chapter 5]{almcmbk94} and we have the following result.

\begin{thm}\cite[Theorem 5.7.1]{almcmbk94}\label{thm:ast}
If the kernel $k$ satisfies
\[
\int_{0}^\infty \inf_{0<y<r} k(x,y) m(dx) >0\quad \text{for every }r>0
\]
and has a Lyapunov function $V\colon(0,\infty)\to[0,\infty)$, i.e.
$\lim_{x\to\infty}V(x)=\infty$ and for some constants $0\le c<1$, $d\ge 0$ 
\[
\int_{0}^\infty V(x)Ku(x) m(dx)\le c \int_{0}^\infty V(x)u(x)m(dx)+d\quad\text{for
} u\in D(m),
\]
then the operator $K$ is asymptotically stable.
\end{thm}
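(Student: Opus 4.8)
The plan is to verify the two hypotheses of the Yosida--Kakutani-type asymptotic stability theorem, which in the reference \cite[Theorem 5.7.1]{almcmbk94} is stated in terms of a general kernel operator; our task is simply to quote it after checking that $K$, with the explicit kernel $k$ from \eqref{e:k1}, is a bona fide stochastic kernel operator on $\SX=L^1(E,\E,m)$ with $m(dx)=xdx$. First I would recall from Theorem~\ref{thm:exist}\eqref{thm:exist:i3} that $K$ is already known to be the transition operator associated with the stochastic kernel $\mathcal{K}(x,B)$, and that the computation leading to \eqref{e:k1} identifies the density of $K$ against $m$, so $K$ is positive, stochastic, and given by $Ku(x)=\int_0^\infty k(x,y)u(y)m(dy)$ with $\int_0^\infty k(x,y)m(dx)=1$ for each $y$. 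This is precisely the standing assumption under which the cited theorem is proved.

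Next I would observe that the hypothesis $\int_0^\infty \inf_{0<y<r}k(x,y)\,m(dx)>0$ for every $r>0$ is exactly a \emph{lower-bound} (overlap) condition of the type required for the Lasota--Yorke / Foguel-alternative machinery: it guarantees the existence of a nontrivial lower function and, combined with mean ergodicity, forces constrictiveness. The Lyapunov (tightness) condition $\int_0^\infty V(x)Ku(x)\,m(dx)\le c\int_0^\infty V(x)u(x)\,m(dx)+d$ with $0\le c<1$, $d\ge 0$, and $V(x)\to\infty$ prevents mass from escaping to infinity, ensuring that the sequence $(K^nu)$ stays uniformly integrable and, together with the lower-function condition, that there is a unique invariant density $u_*\in D(m)$ with $\|K^nu-u_*\|\to 0$ for all $u\in D(m)$. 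Thus both the statement and its proof are contained verbatim in \cite[Theorem 5.7.1]{almcmbk94} once the kernel identification is in place; the role of our work here is merely to exhibit the concrete $k$ in \eqref{e:k1} so that these two integral conditions become checkable in examples.

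The main obstacle — and really the only nontrivial point — is making sure that the abstract hypotheses in \cite{almcmbk94} are literally the ones we are importing: that reference works on a $\sigma$-finite measure space with a stochastic kernel operator, and one must confirm that the weighted measure $m(dx)=xdx$ and the non-negativity and normalization $\int_0^\infty k(x,y)\,xdx=1$ derived from \eqref{e:k1} fit its framework, and that "Lyapunov function" and the overlap condition are stated there in the same form. Since Theorem~\ref{thm:exist}\eqref{thm:exist:i3} already certifies that $K$ is the transition operator of a genuine stochastic kernel, no further integrability or measurability checks are needed. Hence the proof of this theorem is a one-line citation, and I would simply write: this is \cite[Theorem 5.7.1]{almcmbk94} applied to the stochastic kernel operator $K$ with kernel $k$ given by \eqref{e:k1}.
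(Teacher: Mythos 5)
Your proposal is correct and matches the paper exactly: the paper gives no proof of this theorem, presenting it purely as a citation of \cite[Theorem 5.7.1]{almcmbk94}, and your observation that the only work is to check that the stochastic kernel operator $K$ with kernel $k$ from \eqref{e:k1} fits the framework of that reference is precisely the intended reading.
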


In the reminder of this section, we will study the semigroup $\{P(t)\}_{t\ge 0}$
under the assumption that the kernel $\mathcal{J}$ is homogeneous as
in~\eqref{eq:hk}.

\begin{cor}\label{thm:examp}
Assume that there are $r,\gamma>0$ such that
\begin{equation}\label{eq:ccc}
\int_{0}^r \frac{\varphi(z)}{g(z)}dz<\infty\quad\text{and}\quad  \liminf_{x\to
\infty}\frac{\varphi(x)}{x^{\gamma -1}g(x)}>0.
\end{equation}
Then the operator $K$ is asymptotically stable and the semigroup $\{P(t)\}_{t\ge
0}$ is stochastic.
\end{cor}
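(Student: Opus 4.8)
The plan is to verify the two hypotheses of Theorem~\ref{thm:ast} for the operator $K$, whose kernel $k$ with respect to $m(dx)=x\,dx$ is given by~\eqref{e:k1} with $b(x,z)=\frac{1}{z}h(x/z)$ (by~\eqref{eq:hk}), and then conclude via Theorem~\ref{thm:opMarkov}\eqref{it:tm4}. First I would record two elementary consequences of~\eqref{eq:ccc}. Since $\varphi/g\in\Lloc$, the first condition in~\eqref{eq:ccc} forces $\int_0^x\varphi(z)/g(z)\,dz<\infty$ for every $x>0$, so one may take $x_1=0$ in~\eqref{eq:GQ}; then $Q\ge0$ and $Q(0^+)=0$. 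The second condition yields $a>0$ and $x_2>0$ with $\varphi(z)/g(z)\ge a z^{\gamma-1}$ for $z\ge x_2$, whence $Q(z)\ge \frac{a}{\gamma}z^\gamma-C$ for large $z$ and, in particular, $z^\gamma e^{-Q(z)}\to0$ as $z\to\infty$; recall also $Q(\infty)=\infty$ from the standing assumption~\eqref{eq:asGQ}.

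For the positivity hypothesis of Theorem~\ref{thm:ast}, fix $r>0$. Monotonicity of $Q$ gives $e^{Q(y)}\ge1$ and $\max\{x,y\}\le\max\{x,r\}$ for $0<y<r$, so, since the integrand in~\eqref{e:k1} is nonnegative,
\[
\inf_{0<y<r}k(x,y)\ge\ell_r(x):=\int_{\max\{x,r\}}^\infty b(x,z)\frac{\varphi(z)}{zg(z)}e^{-Q(z)}\,dz.
\]
Integrating against $m$, interchanging the order of integration (Tonelli), and using the mass condition $\int_0^z b(x,z)x\,dx=z$ from~\eqref{eq:mass}, I obtain
\[
\int_0^\infty\inf_{0<y<r}k(x,y)\,m(dx)\ge\int_r^\infty \frac{\varphi(z)}{g(z)}e^{-Q(z)}\,dz=e^{-Q(r)}>0,
\]
because $\frac{\varphi}{g}e^{-Q}=-\frac{d}{dz}e^{-Q}$ and $Q(\infty)=\infty$.

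For the Lyapunov condition I would take $V(x)=x^\gamma$, which tends to $\infty$. Since $K$ has kernel $k$, Tonelli's theorem gives $\int_0^\infty V(x)Ku(x)\,m(dx)=\int_0^\infty K^*V(y)u(y)\,m(dy)$ with $K^*V(y)=\int_0^\infty V(x)k(x,y)x\,dx$, so it suffices to prove $K^*V(y)\le c\,V(y)+d$ a.e.\ with $0\le c<1$; the stated inequality then follows from $\|u\|=1$. Interchanging integrals and computing $\int_0^z V(x)b(x,z)x\,dx=z^{\gamma+1}m_\gamma$ with $m_\gamma:=\int_0^1 w^{\gamma+1}h(w)\,dw$ gives
\[
K^*V(y)=m_\gamma\,e^{Q(y)}\int_y^\infty z^\gamma\,\frac{\varphi(z)}{g(z)}\,e^{-Q(z)}\,dz,
\]
and integration by parts (the boundary term at $\infty$ vanishing since $z^\gamma e^{-Q(z)}\to0$) turns this into $K^*V(y)=m_\gamma\bigl(y^\gamma+\gamma\,e^{Q(y)}\int_y^\infty z^{\gamma-1}e^{-Q(z)}\,dz\bigr)$. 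It remains to bound the last term uniformly in $y$: for $z\ge x_2$ use $z^{\gamma-1}\le a^{-1}\varphi(z)/g(z)$ to get $\int_{\max\{y,x_2\}}^\infty z^{\gamma-1}e^{-Q(z)}\,dz\le a^{-1}e^{-Q(\max\{y,x_2\})}$, while on the (possibly empty) range $y\le z\le x_2$ use $e^{-Q(z)}\le1$ and $\int_0^{x_2}z^{\gamma-1}\,dz<\infty$; multiplying by $e^{Q(y)}$ and using $Q(y)\le Q(x_2)$ when $y<x_2$ shows $e^{Q(y)}\int_y^\infty z^{\gamma-1}e^{-Q(z)}\,dz\le C_*$ for a constant $C_*$. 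Hence $K^*V(y)\le m_\gamma V(y)+d$ with $d=m_\gamma\gamma C_*$. Finally, since $\int_0^1 wh(w)\,dw=1$ and $w(1-w^\gamma)>0$ on $(0,1)$, we get $m_\gamma=1-\int_0^1 w(1-w^\gamma)h(w)\,dw<1$, so $c:=m_\gamma<1$.

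Therefore Theorem~\ref{thm:ast} applies and $K$ is asymptotically stable, hence mean ergodic, and Theorem~\ref{thm:opMarkov}\eqref{it:tm4} gives that $\{P(t)\}_{t\ge0}$ is stochastic. The routine parts are the two Tonelli computations; the main obstacle is the Lyapunov drift estimate, specifically extracting the uniform-in-$y$ bound $C_*$ for $e^{Q(y)}\int_y^\infty z^{\gamma-1}e^{-Q(z)}\,dz$ from the one-sided growth hypothesis in~\eqref{eq:ccc}, together with the verification of the strict inequality $m_\gamma<1$.
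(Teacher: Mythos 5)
Your proposal is correct and follows essentially the same route as the paper: both verify the two hypotheses of Theorem~\ref{thm:ast} using $Q(0)=0$ (hence $e^{Q(y)}\ge 1$) for the positivity condition and the Lyapunov function $V(x)=x^\gamma$ with the identical integration-by-parts identity $\int_0^\infty x^\gamma k(x,y)x\,dx=m_\gamma\bigl(y^\gamma+\gamma e^{Q(y)}\int_y^\infty z^{\gamma-1}e^{-Q(z)}dz\bigr)$ and the same observation $m_\gamma<1$. You merely supply details the paper leaves implicit (the Tonelli computation giving $e^{-Q(r)}>0$ and the uniform bound $C_*$ on the drift term), and these are carried out correctly.
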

\begin{proof}
Since $Q(0)=0$, we have
\[
k(x,y)\ge \int_{x}^\infty h\Bigl(\frac{x}{z}\Bigr)\frac{\varphi(z)}{z^2
g(z)}e^{-Q(z)}dz\quad \text{for } 0<y<r<x,
\]
which shows that the first condition in Theorem~\ref{thm:ast} holds,  by
\eqref{eq:ccc}. We also have
 $\lim_{x\to\infty}x^\gamma e^{-Q(x)}=0$ and
\[
\int_{0}^\infty x^\gamma k(x,y)xdx=\int_{0}^1 z^\gamma zh(z)dz
\Bigl(y^\gamma+\gamma e^{Q(y)}\int_{y}^\infty z^{\gamma -1}e^{-Q(z)}dz\Bigr)\le c
y^\gamma+d,
\]
where $c:=\int_{0}^1 z^\gamma zh(z)dz<1=\int_{0}^1 zh(z)dz$, which shows that
$V(x)=x^\gamma$ is a Lyapunov function.
\end{proof}

The assumptions in Corollary~\ref{thm:examp} cannot be essentially weakened.

\begin{exmp}
Suppose that $\varphi(x)=g(x)/x$ for all $x>0$. Then $Q(x)=\log x$ for $x>0$, so
that the sequence $\xi_n$ is of the form
\[
\xi_n=\xi_0\prod_{k=1}^nH^{\leftarrow}(\vartheta_k)e^{\varepsilon_k}\quad\text{for
} n\ge 1.
\]
Let $\mu_0=\int_{0}^1 \log zh(z)z dz$. Observe that $\mu_0$ is always negative and
might be equal to $-\infty$.  If $\mu_0\ge -1$ then for any $x$ we have
$\mathbb{P}_x(\limsup_{n\to\infty}\xi_n=\infty)=1$. If $\mu_0<-1$ then, by the
strong law of large numbers, $\mathbb{P}_x(\lim_{n\to\infty}\xi_n=0)=1$ for
all~$x$. Thus $K$ is not asymptotically stable in both cases and $\{P(t)\}_{t\ge
0}$ is stochastic when $\mu_0\ge -1$, by Remark~\ref{rem:Mi}. In any case, if
$g(x)\le \tilde{a} x$ then $\{P(t)\}_{t\ge 0}$ is stochastic, by
Remark~\ref{rem:Mi0}.
\end{exmp}

\begin{exmp}
Suppose that $g(x)=x^{1-\beta}$ and $\varphi(x)=ax^\alpha$ for $x>0$, where $a>0$.
Then condition \eqref{eq:asGQ} holds if and only if  $\beta\ge 0$ and
$\alpha+\beta\ge 0$. If $\alpha+\beta>0$ then \eqref{eq:ccc} holds, thus
$\{P(t)\}_{t\ge 0}$ is stochastic. Now suppose that $\alpha+\beta=0$. If either
$\beta=0$ or $\mu_0=\int_{0}^1 \log zh(z)zdz\ge -1/a $ then $\{P(t)\}_{t\ge 0}$ is
also stochastic, as in the preceding example. If $\beta>0$ and $\mu_0<-1/a$ then
$\{P(t)\}_{t\ge 0}$ is strongly stable. This follows from the representation
\[
t_n=\frac{1}{\beta}\sum_{k=1}^n (e^{\beta
\varepsilon_k/a}-1)\xi_{k-1}^{\beta}=\frac{\xi_0^\beta}{\beta}\sum_{k=1}^n
(e^{\beta \varepsilon_k}-1)\prod_{l=1}^{k-1}H^{\leftarrow}(\vartheta_l)^\beta
e^{\beta\varepsilon_l/a}
\]
and the fact that the random variable
\[
\tau=\sum_{k=1}^\infty (e^{\beta
\varepsilon_k/a}-1)\prod_{l=1}^{k-1}H^{\leftarrow}(\vartheta_l)^\beta
e^{\beta\varepsilon_l/a}
\]
is finite with probability $1$ precisely when $\mu_0<-1/a$ \cite[Theorem
1.6]{vervaat79}.
\end{exmp}

\subsection{Fragmentation with decay}
In this section we consider fragmentation processes with continuous degradation,
where the degradation process is described by a semidynamical system $\pi$
satisfying the equation
\begin{equation}\label{e:depr}
\frac{\partial}{\partial t}\pi_tx=-g(\pi_tx) \quad \text{for }x,t>0,
\end{equation}
where $g$ is a strictly positive continuous function. Now the corresponding linear
evolution equation will be of the form
\begin{equation*}
\dfrac {\partial u(t,x)}{\partial t}=\frac{1}{x}\frac{\partial}{\partial
x}(xg(x)u(t,x))-\varphi(x)u(t,x)+\int_{x}^\infty b(x,y)\varphi(y)u(t,y)dy.
\end{equation*}
 We refer the reader to
\cite{edwards90,banasiaklamb03,arlottibanasiak04} for related examples.

To construct the minimal PDMP $\{X(t)\}_{t\ge 0}$ with characteristics
$(\pi,\varphi,\mathcal{J})$, where $\pi$ satisfies~\eqref{e:depr}, $\varphi\in
\Lloc$ is nonnegative, and $\mathcal{J}$ is given by~\eqref{eq:Jfr}, we redefine
the functions $G$ and $Q$ from \eqref{eq:GQ} in such a way that formulas
\eqref{d:rtx}--\eqref{eq:xinjmg} remain valid. We assume that there is $\bar{x}>0$
such that
\begin{equation}\label{eq:asGQd}
\int^{\bar{x}}_0 \frac{1}{g(z)}dz=\infty\quad \text{and }\quad
\int^{\bar{x}}_0\frac{\varphi(z)}{g(z)}dz=\infty,
\end{equation}
and  define
\begin{equation}\label{eq:asGQd1}
G(x)=\int_x^{x_0} \frac{1}{g(z)}dz\quad \text{and}\quad Q(x)=\int_x^{x_1}
\frac{\varphi(z)}{g(z)}dz,
\end{equation}
where $x_0=+\infty$ and $x_1=+\infty$ when the integrals exist for all $x$, and,
otherwise, $x_0,x_1$ are any points from $E$.   The semidynamical system $\pi$
defined by \eqref{d:rtx} satisfies~\eqref{e:depr}. The function $G$ is now
decreasing and $Q$ is non-increasing. As the generalized inverse of $Q$ we take
\[
Q^{\leftarrow}(q)=\left\{
                    \begin{array}{ll}
                      \sup\{x:Q(x)\ge q\}, &  q>Q(\infty),\\
                      0, & q\le Q(\infty)\quad\text{and}\quad  Q(\infty)>-\infty.
                    \end{array}
                  \right.
\]
With these alterations equations \eqref{eq:inphi} and \eqref{eq:inpi} remain
valid. The random variables $t_n$ and $\xi_n=X(t_n)$, $n\ge 1$, again satisfy
\eqref{eq:xinjmg}.

The semigroup $\{S(t)\}_{t\ge 0}$, defined by \eqref{eq:dst}, is a substochastic
semigroup on $\SX$ satisfying \eqref{eq:T0St}, whose generator is of the form
\begin{equation*}\label{eq:genAd}
Au(x)=\frac{1}{x}\frac{d}{dx}\bigl(xg(x)u(x)\bigr)-\varphi(x)u(x),\quad
u\in\mathcal{D}(A)=\mathcal{D}_0\cap \SXP,
\end{equation*}
where $u\in \mathcal{D}_0$  if and only if the function $\tilde{u}(x)=xg(x)u(x)$
is such that $\tilde{u}\in AC$, $\tilde{u}'(x)/x $ belongs to $\SX$, and,
additionally  $\lim_{x\to \infty}\tilde{u}(x)=0$ when $G(\infty)=0$. This can be
derived from \cite[Theorem 7]{mackeytyran08}.

We conclude this section with the following characterization of the minimal
semigroup $\{P(t)\}_{t\ge 0}$ on $\SX$ corresponding to
$(\pi,\varphi,\mathcal{J})$.
\begin{cor}\label{cor:Mfd}

If $\varphi$ is bounded on bounded subsets of $(0,\infty)$ then $\{P(t)\}_{t\ge
0}$ is stochastic.

If $V$ is a non-decreasing function such that $V(x)\varphi(x)\ge 1$ for all $x>0$
and
\[
m\{x\in E: \mathbb{P}_{x}\Bigl(\sum_{n=1}^\infty
\varepsilon_nV(\xi_n)=\infty\Bigr)>0\}=0,
\]  then $\{P(t)\}_{t\ge 0}$ is strongly stable.
\end{cor}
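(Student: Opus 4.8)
Both assertions are proved in exact parallel with Corollary~\ref{cor:Mpj} and Corollary~\ref{cor:sspj}: Corollary~\ref{cor:Ms} reduces the first to showing $\mathbb{P}_x(t_\infty<\infty)=0$ for every $x>0$, and Corollary~\ref{cor:ss} reduces the second to showing $m\{x:\mathbb{P}_x(t_\infty=\infty)>0\}=0$, where $t_\infty=\lim_n t_n$ is the explosion time of the minimal PDMP with characteristics $(\pi,\varphi,\mathcal{J})$ of the present subsection.

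For the first assertion the essential feature of the decay case is that the post-jump chain $(\xi_n)$ is non-increasing: by~\eqref{e:depr} the semiflow decreases position, so the pre-jump positions $\eta_n:=Q^{\leftarrow}(Q(\xi_{n-1})+\varepsilon_n)=\pi_{\Delta t_n}\xi_{n-1}$ satisfy $\eta_n\le\xi_{n-1}$, and $\xi_n=H^{\leftarrow}_{\eta_n}(\vartheta_n)\eta_n\le\eta_n$. Hence if $\xi_0=x\le N$, then $\pi_s\xi_{n-1}\le N$ for all $s\ge 0$ and $n\ge 1$; putting $M_N:=\sup_{(0,N]}\varphi<\infty$ we get $\phi_{\xi_{n-1}}(t)=\int_0^t\varphi(\pi_s\xi_{n-1})\,ds\le M_N t$, so $\Delta t_n=\phi^{\leftarrow}_{\xi_{n-1}}(\varepsilon_n)\ge\varepsilon_n/M_N$ and $t_\infty\ge M_N^{-1}\sum_{n\ge1}\varepsilon_n=\infty$ a.s. Since $N$ is arbitrary, $\mathbb{P}_x(t_\infty<\infty)=0$ for all $x$, exactly as in Corollary~\ref{cor:Mpj}.

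For the second assertion the plan is to dominate $t_\infty$ by a series in $\varepsilon_n$ and $\xi_n$ and then invoke the hypothesis along the chain. Using the decay-case identities ($\pi_t x=G^{-1}(G(x)+t)$, $\phi_x(t)=Q(\pi_t x)-Q(x)$, and \eqref{eq:inphi}--\eqref{eq:inpi} with $G,Q$ as in~\eqref{eq:asGQd1}) one obtains $\Delta t_n=\int_{\eta_n}^{\xi_{n-1}}\tfrac{dz}{g(z)}$ and $\varepsilon_n=\int_{\eta_n}^{\xi_{n-1}}\tfrac{\varphi(z)}{g(z)}\,dz$ with $\eta_n\le\xi_{n-1}$. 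Since $\varphi(z)V(z)\ge1$ and $V$ is non-decreasing, on the interval $[\eta_n,\xi_{n-1}]$ one has $\varphi(z)\ge V(z)^{-1}\ge V(\xi_{n-1})^{-1}$, whence $\varepsilon_n\ge V(\xi_{n-1})^{-1}\Delta t_n$, that is,
\[
\Delta t_n\le\varepsilon_n V(\xi_{n-1}),\qquad n\ge1,\qquad\text{so}\qquad t_\infty\le\varepsilon_1 V(\xi_0)+\sum_{n\ge1}\varepsilon_{n+1}V(\xi_n).
\]
Monotonicity of $V$ enters precisely here, to control $\varphi$ along the deterministic piece of the trajectory; in the flow-free Corollary~\ref{cor:sspj} it was not needed. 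Under $\mathbb{P}_x$ the term $\varepsilon_1 V(\xi_0)=\varepsilon_1 V(x)$ is finite a.s.

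What remains — deducing $\sum_{n\ge1}\varepsilon_{n+1}V(\xi_n)<\infty$ $\mathbb{P}_x$-a.s.\ for $m$-a.e.\ $x$ from the hypothesis, which is phrased with $\xi_n$ rather than $\xi_{n-1}$ — is the delicate point, and I expect it to be the main obstacle. The plan is: (i) since $K$ is a positive contraction on $\SX$ with kernel $\mathcal{K}(x,\cdot)=\mathbb{P}_x(\xi_1\in\cdot)$, every $m$-null set is $\mathcal{K}(x,\cdot)$-null for $m$-a.e.\ $x$, so by induction and the Markov property, for $m$-a.e.\ $x$ the chain $(\xi_n)$ stays $\mathbb{P}_x$-a.s.\ in the set where the hypothesis holds; (ii) with the filtration $\mathcal{G}_n=\sigma(\xi_0,\varepsilon_1,\dots,\varepsilon_n,\vartheta_1,\dots,\vartheta_n)$ one has, because $\varepsilon_{n+1}$ is independent of $\mathcal{G}_n$ while $\xi_n$ is $\mathcal{G}_n$-measurable, $\mathbb{E}[(\varepsilon_{n+1}V(\xi_n))\wedge1\mid\mathcal{G}_n]$ comparable to $V(\xi_n)\wedge1$, so a conditional Borel--Cantelli argument reduces the finiteness of $\sum_{n\ge1}\varepsilon_{n+1}V(\xi_n)$ to that of $\sum_{n\ge1}(V(\xi_n)\wedge1)$, which the hypothesis, transported along the chain via the strong Markov property, delivers. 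Then $\mathbb{P}_x(t_\infty=\infty)\le\mathbb{P}_x\big(\sum_{n\ge1}\varepsilon_{n+1}V(\xi_n)=\infty\big)=0$ for $m$-a.e.\ $x$, and Corollary~\ref{cor:ss} completes the proof.
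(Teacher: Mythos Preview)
Your argument for the first assertion is correct and essentially identical to the paper's: both establish that $(\xi_n)$ is decreasing and then bound $\Delta t_n\ge \varepsilon_n/M_N$ where $M_N=\sup_{(0,N]}\varphi$. For the second assertion, your bound $\Delta t_n\le \varepsilon_n V(\xi_{n-1})$ is exactly what the paper obtains (via the upper estimate in its inequality~\eqref{eq:estphi}, using $\varphi(z)\ge 1/V(z)\ge 1/V(\xi_{n-1})$ on the relevant interval). The paper, however, stops there and declares the conclusion a ``direct consequence'' of this bound together with Corollary~\ref{cor:ss}; it does not mention, let alone resolve, the index mismatch you flag.

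That mismatch is in all likelihood a typo in the statement rather than a genuine gap to be bridged: in the example immediately following Corollary~\ref{cor:sspj} the author verifies finiteness of $\tau=\sum_{k\ge1}\varepsilon_k\prod_{l=1}^{k-1}H^\leftarrow(\vartheta_l)^\gamma$, which is (up to the factor $V(\xi_0)$) precisely $\sum_k\varepsilon_k V(\xi_{k-1})$, not $\sum_k\varepsilon_k V(\xi_k)$. With the hypothesis read as $\sum_n\varepsilon_n V(\xi_{n-1})$, both proofs are complete and coincide. Your additional plan (nonsingularity of $K$, conditional Borel--Cantelli) is therefore not in the paper and not needed for the intended statement; it is also only a sketch, and the step ``the hypothesis delivers $\sum_n(V(\xi_n)\wedge 1)<\infty$'' is itself not immediate in the decay case, since $\varepsilon_n$ and $\xi_n$ are correlated there.
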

\begin{proof}
Observe that for all $x,q>0$ we have $Q^\leftarrow(Q(x)+q)< x$. Since
$H^\leftarrow_x(q)\le 1$ for all $x>0$, $q\in[0,1]$, we obtain
\[
\xi_{n}\le Q^\leftarrow(Q(\xi_{n-1})+\varepsilon_n)< \xi_{n-1},\quad n\ge 1,
\]
which shows that the sequence $\xi_n$ is decreasing. We also have
\begin{equation}\label{eq:estphi}
\frac{q}{\sup_{z\in I_{x,q}}\varphi(z)}\le G(Q^{\leftarrow}(Q(x)+q))-G(x)\le
\frac{q}{\inf_{z\in I_{x,q}}\varphi(z)},
\end{equation}
where $I_{x,q}=[Q^{\leftarrow}(Q(x)+q),x]$. The first inequality in
\eqref{eq:estphi} implies that
\[
\phi_{\xi_{k-1}}^{\leftarrow}(\varepsilon_k)\ge \frac{\varepsilon_k}{\sup_{z\le
\xi_{k-1}}\varphi(z)}\ge \frac{\varepsilon_k}{\sup_{z\le \xi_{0}}\varphi(z)},
\quad k\ge 1,
\]
which proves the first assertion, as in the proof of Corollary~\ref{cor:Mfd}. The
second statement is a direct consequence of the second inequality
in~\eqref{eq:estphi} and Corollary~\ref{cor:ss}.
\end{proof}

\begin{exmp}
Consider a homogenous kernel as in~\eqref{eq:hk}. First suppose that
$\varphi(x)\ge a/x^{\gamma}$ for $x>0$, where $a,\gamma>0$. Then $\{P(t)\}_{t\ge
0}$ is strongly stable and condition \eqref{eq:ssest} holds for every density
$u\in\mathcal{D}(A)$.

For the particular choice of $g(x)=x^{1-\beta}$ and $\varphi(x)=ax^\alpha$ for
$x>0$, condition \eqref{eq:asGQd} holds if and only if $\beta\le 0$ and
$\alpha+\beta\le 0$. Hence, if $\beta\le 0$ then $\{P(t)\}_{t\ge 0}$ is stochastic
when $0\le \alpha\le -\beta$ and it is strongly stable when $\alpha<0$.

Observe also that the case when $G(0)<\infty$, which in this example holds when
$\beta>0$, corresponds to the situation when for every $x>0$ there is $t\in
(0,\infty)$ such that  $\pi_t x=0$, so that $0$ is reached in a finite time from
every point.
\end{exmp}

\section*{Acknowledgments} This work was supported by the Natural
Sciences and Engineering Research Council (NSERC, Canada), the Mathematics of
Information Technology and Complex Systems (MITACS, Canada), and by Polish MNiSW
grant N N201 0211 33. This research was partially carried out when the author was
visiting McGill University. The author would like to thank Michael C.~Mackey for
several interesting discussions. Helpful comments of the anonymous referee are
gratefully acknowledged.

\end{document}